\theoremstyle{plain}%
\newtheorem{theorem}{Theorem}
\newtheorem{proposition}[theorem]{Proposition}%
\newtheorem{lemma}[theorem]{Lemma}
\newtheorem{corollary}{Corollary}
\theoremstyle{remark}%
\newtheorem{remark}{Remark}%
\theoremstyle{definition}%
\numberwithin{equation}{section}
\numberwithin{theorem}{section}
\numberwithin{corollary}{section}
\newcommand{\inn}[2]{\left\langle#1,\,#2\right\rangle}
\DeclareMathOperator{\supp}{supp}
\newcommand{\grad}{\nabla}
\newcommand{\Lap}{\Delta}
\newcommand{\di}{\partial}
\DeclareMathOperator{\diam}{diam}
\newcommand{\br}[1]{\left\langle#1\right\rangle}
\newcommand{\si}{\sigma}
\newcommand{\eps}{\epsilon}
\newcommand{\ls}{\lesssim}
\newcommand{\al}{\alpha}
\newcommand{\Cb}{\mathbb{C}}
\newcommand{\Rb}{\mathbb{R}}
\newcommand{\one}{\ensuremath{\mathbf{1}}}
\newcommand{\hf}{\ensuremath{\mathfrak{h}}}
\renewcommand{\l}{\lambda} 
\newcommand{\abs}[1]{\ensuremath{\left\lvert#1\right\rvert}}
\newcommand{\norm}[1]{\ensuremath{\left\lVert#1\right\rVert}}
\newcommand{\sbr}[1]{\left[#1\right]}
\newcommand{\Set}[1]{\left\{#1\right\}}
\newcommand{\fullfunction}[5]{\ensuremath{
		\begin{array}{ccrcl}
			{#1}    & \colon  & {#2} & \longrightarrow & {#3} \\
			\mbox{} & \mbox{} & {#4} & \longmapsto     & {#5}
\end{array}}}
\newcommand{\md}[6]{\ensuremath{
		\ifinner
		\tfrac{\partial{^{#2}}#1}{\partial{#3^{#4}}\partial{#5^{#6}}}
		\else
		\tfrac{\partial{^{#2}}#1}{\partial{#3^{#4}}\partial{#5^{#6}}}
		\fi
}}
\newcommand{\del}[1]{\left(#1\right)}
\newcommand{\thmref}[1]{Theorem~\ref{#1}}
\newcommand{\secref}[1]{Section~\ref{#1}}
\newcommand{\lemref}[1]{Lemma~\ref{#1}}
\newcommand{\propref}[1]{Proposition~\ref{#1}}
\newcommand{\remref}[1]{Remark~\ref{#1}}
\newcommand{\figref}[1]{Figure~\ref{#1}}
\newcommand{\corref}[1]{Corollary~\ref{#1}}
\definecolor{green}{rgb}{0.0, 0.5, 0.5}
\definecolor{lgray}{gray}{0.9}
\definecolor{llgray}{gray}{0.95}
\definecolor{lllgray}{gray}{0.975}
\newcommand{\R}{\mathbb{R}}
\newcommand{\cA}{\mathcal{A}}
\newcommand{\cB}{\mathcal{B}}
\newcommand{\cD}{\mathcal{D}}
\newcommand{\cE}{\mathcal{E}}
\newcommand{\cF}{\mathcal{F}}
\newcommand{\cN}{\mathcal{N}} 
\newcommand{\cX}{\mathcal{X}}
\newcommand{\nc}{\newcommand}
\nc{\h}{\delta}
\nc{\G}{\Gamma}
\nc{\et}{\eta} 
\nc{\gam}{\gamma}
\nc{\ka}{\kappa}
\nc{\lam}{\lambda}
\nc{\Lam}{\Lambda}
\nc{\ta}{\tau}
\nc{\w}{\omega}
\nc{\io}{\iota}
\nc{\s}{\sigma}
\nc{\vphi}{\varphi}
\nc{\e}{\epsilon}
\renewcommand{\k}{\kappa}
\nc{\ran}{\rangle}
\nc{\lan}{\langle}
\newcommand{\Ran}{\operatorname{Ran}}
\renewcommand{\Im}{\mathrm{Im}} 
\nc{\bfone}{{\bf 1}}
\nc{\dd}{\mathrm{d}}
\newcommand{\DETAILS}[1]{}
\newcommand{\x}{\lan x\ran}
\DeclareMathOperator{\Ad}{ad}
\newcommand{\ad}[3]{\Ad^{#1}_{#2}(#3)}
\newcommand{\op}{\mathrm{op}}
\newcommand{\cp}{\mathrm{c}}
\newcommand{\Rem}{\mathrm{Rem}}
\DeclareMathOperator{\dG}{\mathrm{d}\Gamma}
\DeclareMathOperator{\Lip}{\mathrm{Lip}}
\newcommand{\Norm}[1]{{\left\vert\kern-0.25ex\left\vert\kern-0.25ex\left\vert #1 
		\right\vert\kern-0.25ex\right\vert\kern-0.25ex\right\vert}}
\newcommand{\wndel}[1]{\inn{\psi_0}{{#1}\psi_0}}
\newcommand{\wtdel}[1]{\inn{\psi_t}{{#1}\psi_t}}
\newcommand{\A}{\mathcal{A}}
\title{
	Spectral localization estimates for abstract linear Schrödinger equations
}
\begin{document}
 
\title[Spectral localization estimates]{Spectral localization estimates for abstract linear Schrödinger equations
}

\author{Jingxuan Zhang}
\address{Yau Mathematical Sciences Center\\
	Tsinghua University\\
	Haidian District\\
	Beijing 100084, China }
\address{Department of Mathematical Sciences\\
	University of Copenhagen\\
	Universitetsparken 5\\
	2100 Copenhagen, Denmark}
\email{jingxuan@tsinghua.edu.cn}

\date{\today}
\subjclass[2020]{35Q41   (primary); 35B40  , 35B45 ,81U90 ,  37K06      (secondary)}
\keywords{Schr\"odinger equations; A priori estimates}

\pagestyle{plain}
\maketitle
\begin{abstract}
	We study the propagation properties of abstract linear Schr\"odinger equations of the form $i\di_t\psi = H_0\psi+V(t)\psi$, where $H_0$ is a self-adjoint operator and $V(t)$ a time-dependent potential. We present explicit sufficient conditions ensuring that if the initial state $\psi_0$ has spectral support in $(-\infty,0]$ with respect to a reference self-adjoint operator $\phi$, then, for some $c>0$ independent of $\psi_0$ and all $t\ne0$, the solution $\psi_t$ remains spectrally supported in $(-\infty,c\abs{t}]$ with respect to $\phi$, up to an $O(\abs{t}^{-n})$ remainder in norm. The main condition is that the multiple commutators of $H_0$ and $\phi$ are uniformly bounded in operator norm up to the $(n+1)$-th order. We then apply the abstract theory to a class of nonlocal Schr\"odinger equations on $\Rb^d$, proving that any solution with compactly supported initial state remains approximately supported, up to a polynomially suppressed tail in $L^2$-norm, inside a linearly spreading region around the initial support for all $t\ne0$. 
	
\end{abstract}

	\section{Introduction}

We consider the following  abstract linear Schr\"odinger equation on a Hilbert space $\hf$:
\begin{align}\label{SE}
	i\di_t\psi = H(t)\psi,\quad H(t)=H_0+V(t).
\end{align}
 Here $H_0$  is a densely defined,  self-adjoint operator  on $\hf$ and $V(t)$ is a time-dependent potential such that $H(t)$ admits bounded propagator on $\hf$ for all times. 

Our aim in this paper is to control the spectral localization properties of states evolving according to \eqref{SE}. Specifically, fix a reference self-adjoint operator $\phi$ on $\hf$ and let  $P_{a}$ be the spectral projection of $\phi$ onto the half-line $(a,\infty)$. 
Under suitable conditions on the commutator between $\phi$ and the system Hamiltonian, we prove that any solution $\psi_t,\,t\in\Rb$ to \eqref{SE} with initial state $\psi_0$ satisfies, for some $c,\,C>0$  independent of  $\psi_0$ and all $t\ne0$,
\begin{align}\label{locEst}
	\norm{P_{c\abs{t}} \psi_t}_\hf\le C \del{\norm{P_{0}\psi_0}_\hf+\abs{t}^{-n}\norm{\psi_0}_{\hf}}.
\end{align}
Estimate \eqref{locEst} ensures that if the initial state $\psi_0$ has spectral support in $(-\infty,0]$ w.r.t.~the reference operator $\phi$, then the corresponding solution $\psi_t$ remains spectrally localized in $(-\infty,c\abs t]$ w.r.t.~$\phi$ up to a polynomially decaying remainder in norm. See Theorem \ref{thmMain} for precise statement and the subsequent remarks for generalizations. The physical significance is discussed in \secref{secLit}.


We then apply the general theory \eqref{locEst} to study spacetime localization properties for some concrete models of nonlocal dispersive equations. We work in $\hf=L^2(\Rb^d)$ and, for $X\subset\Rb^d$, $a\ge0$,  denote by $\1_X$ the characteristic function of $X$, 
$$X_{a}:=\Set{x\in\Rb^d:d_X(x)\le a},\quad\text{and}\quad X_{a}^\cp:= \Rb^d\setminus X_{a}.$$
We choose the reference operator $\phi=d_X$, the multiplication operator by the distance functions $d_X(x):=\inf_{y\in X}\abs{y-x}$, so that $P_{a}$ amounts to the multiplication operator by $\one_{X_a^\cp}$.
The Hamiltonians $H(t)=H_0+V(t)$ are given by a class of nonlocal non-autonomous operators on $L^2(\Rb^d)$, with $H_0$ satisfying suitable finite moment bounds and potential $V(t)$ given by arbitrary uniformly bounded multiplication operators. 
Under condition \eqref{k-cond0} on the kernel of $H_0$, we prove that if the wave function $\psi_t$ evolving according to such a nonlocal Schr\"odinger equation is supported in $X$ at $t=0$, then there holds the dispersive estimate
\begin{align}
	\label{locEstL2}
	\sup_{t\ne 0} \abs{t}^n	\int_{X_{c\abs t}^\cp}\abs{\psi_t}^2\le C\norm{\psi_0}_{L^2}.
\end{align}
From here we conclude the Strichartz-type estimates 
\begin{align}
	\label{StrEst}
	\norm{\one_{X_{c\abs{t}}^\cp}\psi_t}_{L^p_t(L^2_x)}\le C \norm{\psi_0}_{L^2},
\end{align} for all $p>1/n$ and $\psi_0$ with $\supp \psi_0\subset X$. 
Furthermore, {by Markov's inequality and \eqref{locEstL2}, we have 
	\begin{align}
		\label{tailEst}
		\sup_{t\ne0}\mu\bigl(\bigl\{{{x\in X_{c\abs{t}}^\cp}:\abs{\psi_t}^2\ge \abs{t}^{-n}}\}\bigr)
		\le C\norm{\psi_0}_{L^2}.
	\end{align}
Localization estimates \eqref{locEstL2}--\eqref{tailEst} impose direct constraints on the size of the probability tails.}
See Section \ref{secApplRes} for the precise results.


{The novelty of our results in this paper is twofold. Firstly, we identify explicit sufficient conditions ensuring spectral localization estimate \eqref{locEst} for abstract Schr\"odinger equations (see conditions \eqref{commVphi}--\eqref{commHphi} below). In particular, the system Hamiltonian $H(t)$ does not enter the conditions directly, but only through its commutators with the reference operator $\phi$ appearing in \eqref{locEst}. Secondly, comparing to recent results \cite{MR4254070,MR4604685} where similar propagation estimates as \eqref{locEst} are obtained for the standard Schr\"odinger equation and the Hartree equation, our results hold in an abstract  Hilbert space setting and complement existent results in the case where the Hamiltonian involves nonlocal operators and the reference operator $\phi$ is not given by specific forms. }

\subsection{Organization of the paper}
In Section \ref{secRes}, we present our main results, illustrate the key technical steps, and discuss the methodology in the context of relevant literature.   
In Section \ref{chapProofs}, we furnish the proofs of the main results, Theorems \ref{THMRME}--\ref{THMAP}, in the general setting.

In Section \ref{chapMVENonLoc}, we illustrate applications to a large class of nonlocal Schr\"odinger equations on $\Rb^d$. The models under consideration have the favourable property that they satisfy the main condition laid out in Section \ref{secRes}  with any multiplication operator $\phi$ by functions in the homogeneous Sobolev space $\dot W^{1,\infty}$ (i.e., weakly differentiable with $L^\infty$-gradient). These could be viewed as typical models for nonlocal Hamiltonians, as the fractional Laplacian $H=(-\Lap)^{1/2}$ does enjoy the same property, owning to the theory of Calder\'on commutators. 

In Section \ref{chapPre}, we complete the proof of some technical estimates needed to establish certain expansion formulae in Section \ref{chapProofs}. In the appendix, we prove certain commutator bounds for the nonlocal Hamiltonians studied in Section \ref{chapMVENonLoc}.

\subsection*{Notation}\label{secNota}
	We denote by $\1$ the identity operator, $\cD(A)$ the domain of an operator $A$, and  
$\|\cdot\|$   the norm of  operators on $\hf$ and sometimes that of vectors in $\hf$.
$\cB(\hf)$ denotes the space of bounded operators on $\hf$.
We make no distinction in our notation between a function $f\in\hf$ and the associated multiplication operator $\psi(x)\mapsto f(x)\psi(x)$ on $\hf$.

The commutator $[A,B]$ of two operators $A$ and $B$ is first defined as a quadratic form on $\cD(A)\cap\cD(B)$ (always assumed to be dense in $\hf$) and then extended to an operator. Similarly, the multiple commutators of $A$ and $B$ are defined recursively by $\ad{0}{B}{A}=A$ and $\ad{p}{B}{A}=[\ad{p-1}{B}{A},B]$ for $p=1,2,\ldots$.

\section{Setup and main result}\label{secRes}

Let $\hf$ be a complex Hilbert space equipped with inner product $\inn{\cdot}{\cdot}$ and induced norm $\norm{\cdot}$.
We consider a dynamical system described by \eqref{SE}, with $H(t)$ defined on a common dense domain $\cD=\cD(H_0)$ for all times. Furthermore we assume \eqref{SE} is globally well-posed on $\hf$. By standard perturbation theory, a simple sufficient condition is that $H_0$ is a densely defined self-adjoint operator on $\hf$ and $V(t)$ is uniformly bounded for all $t$. 

We will mainly work in the Heisenberg picture and study the Heisenberg evolution, $\al_t$, for differentiable families $A(t)\in\cB(\hf)$, characterized by the duality relation
\begin{align}
	\label{Aevol}
\inn{\psi_0}{\al_t(A(t))\psi_0}=\inn{\psi_t}{A(t)\psi_t} ,
\end{align}
where $\psi_t,\,t\in\Rb$ is the unique global solution to the Schr\"odinger equation \eqref{SE} with $\psi_t\vert_{t=0}=\psi_0\in\cD$. 

\subsection{Main result}\label{secMainRes}
Throughout the paper, we fix a reference self-adjoint operator $\phi$ on $\hf$, such that $\cD(H_0)\cap\cD(\phi)$ is dense in $\hf$.  Our main assumptions are then stated in terms of commutators between the reference operator $\phi$ and the system Hamiltonian in \eqref{SE}. 

First, for some integer  $n\ge1$, we assume that $\phi$ `almost commutes' with the potential, in the sense that $[\phi,V(t)]$ extends to bounded operators for all times and satisfies, for some fixed $C_V>0$, 
\begin{align}
	\label{commVphi}
\norm{G}_{L^1(\Rb)}\le C_V\quad \text{ where }\quad G(t):= \norm{[\phi,V(t)]} .\tag{C1}
\end{align}
Second, we assume that the multiple commutators 
$\ad{p}{\phi}{H_0},\,p=1,\ldots,n+1$, all extend to bounded operators on $\hf$; namely, there exist $\kappa_1,\ldots,\kappa_{n+1}>0$ such that 
\begin{align}\label{commHphi}
	\norm{\ad{p}{\phi}{H_0}} \le \kappa_p\qquad (p=1,\ldots,n+1).\tag{C2}
\end{align}

To state our main result, let \begin{align}\label{kappaDef}
	\kappa\equiv \kappa_1,
\end{align}
so that, by \eqref{commVphi}, $\kappa$ amounts to the norm of the generalized group velocity operator $i[H,\phi]$. 
Recall that $P_{a},\,a\in\Rb$ is the spectral projection of $\phi$ onto  $(a,\infty)$, explicitly, 
\begin{align}\label{Pdef}
	P_{a}\equiv\1_{(a,\infty)}(\phi).
\end{align} 
The main result of this paper is the following:
\begin{theorem}[Spectral localization]  \label{thmMain}
	Suppose \eqref{commVphi}--\eqref{commHphi} hold for some $n\ge1$. 
	Then, for any $ c > \kappa$, there exists $C=C(n,c)>0$ such that for all $t\ne0$, the following operator inequality holds on $\hf$:
		\begin{equation}
			\label{eLCgen}
		\boxed{	\al_t\del	{P_{{c\abs{t}}}}\le P_0 + C(P_0+C_V)\abs{t}^{-1} + C\abs{t}^{-n}.} 
	\end{equation}
\end{theorem}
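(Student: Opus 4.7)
My plan is to dominate the sharp projection $P_{c\abs{t}}$ by a smooth symbol of $\phi$, compute its Heisenberg derivative via \eqref{Aevol}, and apply a quantitative $n$-th order multi-commutator expansion controlled by \eqref{commHphi}--\eqref{commVphi}. The underlying mechanism is that $c$ outruns the ``group velocity'' $\k=\norm{\ad{1}{\phi}{H_0}}$, making the leading term in the Heisenberg derivative of the smoothed projection sign-definite and non-positive; the higher-order commutators and the potential $V$ then supply the remainders in \eqref{eLCgen}.

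\textbf{Smoothing and commutator expansion.} Pick a smooth non-decreasing $\chi:\Rb\to[0,1]$ with $\chi\equiv 0$ on $(-\infty,-1]$ and $\chi\equiv 1$ on $[0,\infty)$, and for a scale $s>0$ to be chosen set $F_\tau:=\chi((\phi-c\abs\tau)/s)$, so that $P_{c\abs\tau}\le F_\tau\le P_{c\abs\tau-s}$ by monotonicity of the spectral calculus. By \eqref{Aevol},
\begin{align*}
\al_t(F_t)-F_0=\int_0^t\al_\tau\del{\di_\tau F_\tau+i[H(\tau),F_\tau]}\,d\tau,
\end{align*}
with $\di_\tau F_\tau=-(c/s)\sgn(\tau)\chi'_\tau$, where $\chi^{(p)}_\tau:=\chi^{(p)}((\phi-c\abs\tau)/s)$. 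Expanding $i[H_0,F_\tau]$ via a Helffer--Sj\"ostrand almost-analytic expansion (the content of \thmref{THMRME}) gives, under \eqref{commHphi},
\begin{align*}
i[H_0,F_\tau]=\sum_{p=1}^n c_p\,s^{-p}\chi^{(p)}_\tau\ad{p}{\phi}{H_0}+R_n(\tau),\qquad\norm{R_n(\tau)}\le Cs^{-(n+1)}\k_{n+1}.
\end{align*}
The $p=1$ term has operator norm $\le\k/s$ and combines with $\di_\tau F_\tau$ to produce a sign-definite non-positive contribution once $c>\k$ (in the symmetrized / positive-commutator sense, since $\chi'_\tau$ and $\ad{1}{\phi}{H_0}$ generically do not commute, with the symmetrization error absorbed into the higher-order terms). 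The remaining $p\ge2$ contributions are $O(s^{-p}\k_p)$ in norm, and \eqref{commVphi} gives $\norm{[V(\tau),F_\tau]}\ls s^{-1}G(\tau)$.

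\textbf{Integration and reduction to $P_0$.} Integrating over $[0,\abs t]$, discarding the sign-definite leading term, and taking $s\sim\abs t$ (using \thmref{THMAP} to organize the remainders) yields
\begin{align*}
\al_t(F_t)\le F_0+C\abs t^{-n}+C\abs t^{-1}C_V.
\end{align*}
Since $F_0=\chi(\phi/s)\le P_0+B$ with $B=\chi(\phi/s)\mathbf{1}_{(-s,0]}(\phi)\le P_{-\abs t}-P_0$ supported on $\phi\in(-\abs t,0]$, it remains to bound the Heisenberg-evolved contribution of $B$. The key observation is that $B$'s spectral support lies at distance at most $\abs t$ below zero, so by the same velocity-bounded propagation states starting in $B$'s range cannot reach the wavefront $\Set{\phi>c\abs t}$ in time $\abs t$ when $c>\k$; a bootstrap re-application of the commutator argument, with $B$ in place of $F_t$, yields $\al_t(B)\le C(P_0+C_V)\abs t^{-1}+C\abs t^{-n}$, producing finally \eqref{eLCgen}.

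\textbf{Principal difficulty.} The most demanding technical step is the quantitative multi-commutator/Taylor expansion with operator-norm remainder $\norm{R_n}\le Cs^{-(n+1)}\k_{n+1}$ uniformly in $s$; implementing this via Helffer--Sj\"ostrand requires an almost-analytic extension of $\chi$ whose $\bar\di$-decay is tuned to the scaling in $s$ and to the commutator bounds \eqref{commHphi}. A second subtle point is the drift-vs-velocity inequality at $p=1$, which requires a symmetrized positive-commutator interpretation rather than a naive norm bound on the non-commuting product $\chi'_\tau\ad{1}{\phi}{H_0}$, with the symmetrization error of size $O(s^{-2}\k_2)$ absorbed into the higher-order terms.
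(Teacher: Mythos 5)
Your overall skeleton---smooth the sharp projection, compute the Heisenberg derivative, expand $i[H_0,\cdot]$ in multi-commutators controlled by \eqref{commHphi}, exploit $c>\kappa$ to make the leading term sign-definite, integrate, and take $s\sim\abs t$---is the same strategy as the paper. But there is a genuine gap in your choice of cutoff and in the ``bootstrap'' you invoke to repair it.

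You take $\chi\equiv 0$ on $(-\infty,-1]$ and $\chi\equiv 1$ on $[0,\infty)$, so that $F_\tau=\chi\bigl((\phi-c\abs\tau)/s\bigr)\ge P_{c\abs\tau}$. The price is that at $\tau=0$ you do \emph{not} have $F_0\le P_0$; instead $F_0\le P_0+B$ with $B=\chi(\phi/s)\,\one_{(-s,0]}(\phi)$, an operator of norm $O(1)$ supported on the spectral band $\phi\in(-s,0]$. Your integrated estimate then gives at best $\al_t(P_{c\abs t})\le P_0+B+\text{small}$, which is strictly weaker than \eqref{eLCgen} because $B$ does not vanish as $\abs t\to\infty$. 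The bootstrap you propose---that $\al_t(B)\le C(P_0+C_V)\abs t^{-1}+C\abs t^{-n}$---is false as an operator inequality. Indeed, pair both sides against a state $\psi_0$ with $\phi$-spectral support in $(-s,0]$ (so $\inn{\psi_0}{P_0\psi_0}=0$): then $\inn{\psi_0}{\al_t(B)\psi_0}=\inn{\psi_t}{B\psi_t}$ can stay comparable to $\norm{\psi_0}^2$ for all $t$; e.g.\ if $[H_0,\phi]=0$ and $V\equiv 0$ the state does not move in $\phi$ at all, so the left side is $\approx\norm{\psi_0}^2$ while the right side tends to $0$. There is also a circularity risk in the heuristic ``states starting in $B$'s range cannot reach the wavefront'': that propagation statement is essentially what the theorem is proving.

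The paper avoids the $B$ issue by construction. The ASTLO class $\cX$ in \eqref{F} imposes $\supp\chi\subset(0,\infty)$ with $\supp\chi'\subset(0,\delta)$, so $\A_s(0,\chi)=\chi(\phi/s)\le\norm{\chi}_{L^\infty}P_0$ \emph{exactly}, with no low-lying spillover. The apparent loss in the upper comparison $P_{c\abs t}\le\A_s(t,\chi)$ is then recovered in \propref{propGeo} by centering the cutoff at a strictly smaller slope $c'<c$ and choosing $s=\delta^{-1}(c-c')\abs t$, so that $\chi$ has already saturated its sup on $\phi>c\abs t$. In short, the cutoff must transition entirely \emph{above} $0$ (not straddle it), and the dominance of $P_{c\abs t}$ is arranged by shifting the center and scaling $s$, not by letting $\chi$ extend below $0$. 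A minor additional omission: your integrated bound $\al_t(F_t)\le F_0+C\abs t^{-n}+C\abs t^{-1}C_V$ drops the term $Cs^{-1}\A_s(0,\xi)$ produced by the recursive monotonicity step; after \propref{propGeo} this is exactly the $CP_0\abs t^{-1}$ contribution in \eqref{eLCgen}.
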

This theorem is proved at the end of \secref{secIdea}

\begin{remark}
	Let $C_V=0$ in \eqref{commVphi}. Then	by the duality relation \eqref{Aevol}, it is readily verifiable that \eqref{eLCgen} is equivalent to the localization estimate \eqref{locEst} in the Schr\"odinger picture. 
\end{remark}

\begin{remark}
	For $[H_0,\phi]=0$, we have $\kappa=0$ and for large $t$, the propagation estimate \eqref{eLCgen} affirms the fact that $\phi$ is conserved along the evolution \eqref{SE}. 

\end{remark}

\begin{remark}
	Replacing $\phi\to -\phi$ in \eqref{Pdef} and setting $P_a^-:=\1_{(-\infty,a)}$, we conclude from \eqref{eLCgen} that
				\begin{equation}\label{eLCgen-}
		{	\al_t\del	{P_{{-c\abs{t}}}^{-}}
			\le  P_0^- + C(P_0+C_V)\abs{t}^{-1} + C\abs{t}^{-n}}.
	\end{equation}
	Similarly, shifting the reference operator $\phi\to \phi-b$ yields
					\begin{equation}\label{eLCgenb}
		{	\al_t\del	{P_{{b+c\abs{t}}}}
			\le P_b + C(P_b+C_V)\abs{t}^{-1} + C\abs{t}^{-n} },\quad b\in\Rb.
	\end{equation}
	Note that both ways of modification above have no bearing on conditions \eqref{commVphi}--\eqref{commHphi}.
\end{remark}

\begin{remark}
	In	\cite{MR1720738}, Hunziker, Sigal, and Soffer proved a corresponding `minimal escape velocity' bound which complements estimate \eqref{eLCgen}: Suppose $C_V=0$ in \eqref{commVphi} and condition \eqref{commHphi} holds for some $n\ge1$. Assume further $i[H_0,\phi]\ge \theta $ for some $\theta >0$. Then, by \cite[eq.~(1.11)]{MR1720738}, for any $0<\vartheta<\theta$ and $a\in\Rb$, there exists $C>0$ s.th.~for all $t>0$,
	\begin{equation}\label{eLCgenLow}
		\al_t\del	{P_{{a+\vartheta{t}}}^-}
		\le    {P_{{ a}}^-+C{t}^{-n} }.
	\end{equation}
	Combining \eqref{eLCgenLow} and \eqref{eLCgenb} with $C_V=0$, we obtain a refined two-sided control on the propagation of spectral supports of evolving states: Fix $a>b$ and assume the initial state $\psi_0\in \Ran \1_{(a,b)}(\phi)$. Then, for any $\vartheta<\theta\le \kappa <c$, the solution $\psi_t,\,t>0$ generated by $\psi_0$ remains localized in the linearly expanding `annular regions' $\Ran \1_{(a+\vartheta t,b+ct)}(\phi)$, up to an $O(t^{-n})$ remainder in norm.
\end{remark}

\subsection{Literature review and discussion}\label{secLit}
The question of \textit{locality} versus \textit{nonlocality} has always been a fundamental issue in the study of physical phenomena and the corresponding mathematical models.  On $\hf=L^2(\Rb^d)$, 
our results yield, through \eqref{locEstL2}--\eqref{tailEst},  spacetime localization estimates for the propagation of information in a range of non-relativistic quantum mechanical models. Establishing such estimates is a delicate matter, because the dispersive structure of the governing evolution equations generally leads to an apparent lack of locality in the restrictive sense.  For instance, suppose one defines, as usual, the maximal propagation speed as the infimum of all $c$'s such that states initially supported in $X\subset \Rb^d$ at time $t=0$ remains supported, for all times $t>0$, in the light cone $X_{ct}$. Then, even for the simplest example of a free particle evolving according to the Schr\"odinger equation $i\di_t\psi=-\Lap \psi$, infinite speed of propagation can be observed by examining the Fourier transform of the solution and using the superlinear growth of the dispersion relation. 
This general idea also leads to infinite speed of propagation, with the usual definition above, for typical $1$-body quantum evolutions described by a large class of dispersive equations  \cite{MR1264524,MR1443322}.

Historically, for quantum evolutions described by the standard Schr\"odinger equations with Hamiltonians $H=-\Lap+V$, where the potential $V$ is sufficiently regular, one approach to recover an appropriate sense of locality is to introduce an energy cutoff adapted to the spectrum of $H$ on the initial state, and then show that the probability of finding the (microlocalized) state in the classically forbidden region vanishes asymptotically in time. Thus, localization properties of evolving states are reformulated in terms of propagation estimates for certain time-dependent observables identifying the spacetime support of states at time $t$, and finite speed of propagation is established in terms of the resulting propagation estimates. 
This idea is also the starting point of the present paper.

More precisely, V.~Enss proved in his seminal works  \cite{En1,En2} that if a particle with unit mass is initially localized in a ball $X$ at $t=0$ and has energy below $c^2/2$, then  the probability, $p(t)$, of finding the particle at time $t>0$ in the classically forbidden region $X_{ct}^\cp$ vanishes as a $L^1$ function, i.e., $\int p(t)\,dt<\infty$.
This way one obtains {effective light cones} (ELCs), viz., regions outside of which the probability of finding the particle  vanishes asymptotically in time. 
Notice that the ELCs obtained this way are energy-dependent, in agreement with the physical intuition that a particle should move at a speed proportional to the square root of its energy. 

The result of Enss was subsequently improved in \cite{SigSof,Skib} and, more recently, \cite{MR4254070,MR4604685}, to Schr\"odinger equations with time-dependent Hamiltonians and the Hartree equation. Historically, such propagation properties have played crucial roles in scattering theory of Schr\"odinger operators, leading to important breakthroughs in the study of asymptotic completeness of $N$-body problems by Enss \cite{En3, En4}, Skibsted \cite{Skib2, Skib3}, and Sigal-Soffer \cite{SigSof1,SigSof2,SigSof3,SigSof4}, among many others. For reviews of the development in scattering theory along this line, see \cite{HunSig2,HunSig3}.

Our approach to derive the main propagation estimate \eqref{locEst} is a generalization of the monotonicity method originated from the classical works of Sigal and Soffer's \cite{SigSof,SigSof1,SigSof2,SigSof3,SigSof4} in scattering theory, in which the authors proved that for general  time-dependent Schr\"odinger equations on $\Rb^d$, evolving states admit ELCs that spread out in space  at a finite rate.
The results of the seminal works \cite{SigSof,SigSof2} were improved in \cite{Skib, MR1335378, MR4254070, MR4604685} and  extended to non-relativistic QED  
in \cite{BoFauSig}, open quantum systems in \cite{MR4529865,MVBvNL}, nonlinear equations in \cite{MR4604685} and condensed matter physics in \cite{FLS1, FLS2,LRSZ}.  Our result is motivated and built upon the works mentioned above and more.

\subsection{Key steps for proving \thmref{thmMain}}\label{secIdea}
In this section, we  illustrate the key steps in the proof of our main result, the operator inequality \eqref{eLCgen}.

Our approach to study the localization properties of quantum evolutions, 
pioneered in \cite{SigSof, Skib, MR1335378, MR4254070, BoFauSig,FLS1, FLS2}, is  based on approximate monotonicity formulae for certain propagation-identifying observables. As the starting point,  we define, for any differentiable family $A(t)\in\cB(\hf)$, the Heisenberg derivative 
\begin{align}
	\label{HeisD}&D_{(\cdot)} A(t):=\di_tA(t)+i[(\cdot), A(t)].
\end{align}
It is readily verified that the evolution $\al_t$, defined by relation \eqref{Aevol}, solves the Heisenberg equation
\begin{align}\label{HE}
	\di_t(\al_t(A(t))) = \al_t(D_H(A(t))).
\end{align} 
By the fundamental theorem of calculus and the fact that $\al_0(A(0))=A(0)$, it follows that 
\begin{align}\label{intHE}
	\al_t(A(t))=A(0)+\int_0^t\al_t(D_H(A(r)))\,dr.
\end{align}
Our goal is to establish monotonicity estimates, based on \eqref{intHE}, for the evolution  of suitable time-dependent observables $A(t)$. Roughly speaking, we design $A(t)$ so that it satisfies the following properties:
\begin{itemize}
	\item$\al_t(A(t))$ is constrained by a time-decaying envelope, with $\al(A(t))\ls A(0)+\abs{t}^{-n}$.  
	\item $ A(t) $ is comparable with $P_{c\abs{t}}$, with  $P_{c\abs{t}}\le A(t)$ for $t\ne0$ while $A(0)\le P_0$. 
\end{itemize}
Combining these, we arrive at the desired estimate \eqref{eLCgen}. 

  Below we elaborate on the key steps in establishing these properties.

\subsubsection{Construction of ASTLOs.}

We fix a test light-cone slope $c>0$ and  let $s>0$ be a large adiabatic parameter.  For the reference operator $\phi$ entering \eqref{commVphi}--\eqref{commHphi},  times $t\in\Rb$, and   smooth cutoff functions $\chi$ in a suitable class $\cX\subset C^\infty\cap L^\infty(\Rb,\Rb_{\ge0})$ (see \figref{chifig1} below), we define the \textit{adiabatic spectral localization observables}, or \textit{ASTLOs} (adopting the terminology of \cite{FLS1,FLS2}), as
\begin{equation}\label{Astchi}
	\A_s(t,\chi):=\chi\del{\frac{\phi-c\abs{t}}{s}}.\tag{ASTLO}
\end{equation} 	
The precise construction of the class $\cX$ is rather flexible and not so relevant to the present exposition; we defer it to \eqref{F} where the detailed proof is presented.
Note that $\A_s(\cdot)$ is an operator-valued function defined by functional calculus of the self-adjoint operator $s^{-1}(\phi-c\abs{t})$. 

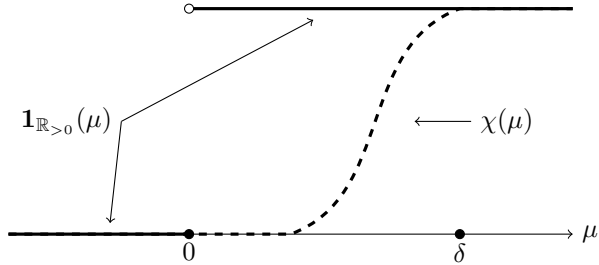
\begin{figure}[H]
	\centering
	\begin{tikzpicture}[scale=3]
		\draw [->] (-.5,0)--(2,0);
		\node [right] at (2,0) {$\mu$};
		\node [below] at (.3,0) {$0$};
		\draw [fill] (.3,0) circle [radius=0.02];
		
		
		\node [below] at (1.5,0) {$\delta $};
		\draw [fill] (1.5,0) circle [radius=0.02];

		\draw [very thick] (-.5,0)--(.3,0);
		\draw [very thick] (.3,1)--(2,1);				
		\filldraw [fill=white] (.3,1) circle [radius=0.02];
		
		\draw [dashed, very thick] (-.5,0)--(.75,0) [out=20, in=-160] to (1.5,1)--(2,1);

		\draw [->] (1.55,.5)--(1.3,.5);
		\node [right] at (1.55,.5) {$\chi(\mu)$};
		
		\draw [->] (0,.5)--(.85,.95);
		\draw [->] (0,.5)--(-.05,.05);
		\node [left] at (0,.5) {$\one_{\Rb_{>0}}(\mu)$};
	\end{tikzpicture}
	\caption{A typical function $\chi\in \cX$ compared with the characteristic function of $\Rb_{>0}$. Here $\delta>0$ is a parameter entering the definition of $\cX$ through \eqref{RMEproto} below. In essence, $\chi$ is a smoothed-out version of $\one_{\Rb>0}$ with derivative supported in $(0,\delta)$.}\label{chifig1}
\end{figure}

\begin{remark}
	Regarding the nomenclature of `ASTLO', we say that $\cA_s(t,\chi)$ is adiabatic since, for a test light-cone slope $c=O(1)$ and a large adiabatic parameter  $s\gg1$, the velocity $\di_t\A_s(t,\chi)=-cs^{-1}\A_s(t,\chi')=O(s^{-1})$ varies at a slow scale. 
To see that $\cA_s(t,\chi)$ identifies the spectral localization property of states, one can view $\cA_s(\cdot,\chi)$ as a smoothed spectral cutoff function associated to $\phi$ and various half-lines (see \figref{chifig1}).
Indeed, for functions $\chi$ in appropriate classes and suitably chosen $s$, we can arrange to have
\begin{align}\label{chi-0s-est'} 
	\A_s(0,\chi)  \le&   P_{0},\\
	\label{chi-ts-est'}P_{c\abs{t}} \le &\A_s(t,\chi)\quad (t\ne0).
\end{align}
The detailed relations  are formulated and proved in \propref{propGeo}.
\end{remark}

\subsubsection{Differential inequality for ASTLOs.}

By relation \eqref{HE} and the almost-commuting assumption \eqref{commVphi}, 
it is easy to verify that for some absolute constant $C>0$,
\begin{align}
	\label{HeisDrel}
	\di_t\al_t(\A_s(t,\chi))\le \al_t(D_{H_0}\A_s(t,\chi))+Cs^{-1}G(t).\tag{H}
\end{align}
See \lemref{lem5.5} and \eqref{217} for details.
Further exploiting relation \eqref{HeisDrel}, we obtain
\begin{theorem}[Recursive monotonicity estimate]\label{THMRME}Suppose the evolution $\al_t$ satisfies identity \eqref{HeisDrel}, and condition \eqref{commHphi} holds for some $n\ge1$.  
	Then, for any $ c > \kappa$ and  $\chi\in\cX$, there exists $C>0,\,\xi\in\cX$ depending only on $n,\chi$ such that for $\delta:=c-\kappa>0$ and all $s>0,\,t\in\Rb $:
	\begin{align}\label{RMEproto} \di_t\al_t\del{\A_s(t,\chi)} 
		\leq& -\delta s^{-1}{\al_t\del{\A_s(t,\chi')}}+ Cs^{-2}{\al_t\del{\A_s\del{t,\xi'}}}+ Cs^{-1}\del{s^{-n}+ G(t)}.
		\tag{RME}
	\end{align}
\end{theorem}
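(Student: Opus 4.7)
Given identity \eqref{HeisDrel}, it suffices to establish a pointwise-in-$t$ operator inequality on the Heisenberg derivative $D_{H_0}\cA_s(t,\chi) = \partial_t\cA_s(t,\chi) + i[H_0,\cA_s(t,\chi)]$. Writing $B := s^{-1}(\phi - c\abs{t})$ so that $\cA_s(t,\chi) = \chi(B)$, functional calculus immediately yields
\[
\partial_t \chi(B) = -c\,\sgn(t)\,s^{-1}\chi'(B) = -c\,\sgn(t)\,s^{-1}\cA_s(t,\chi'),
\]
which supplies the target leading negative term; by symmetry I restrict to $t > 0$.

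The central technical step is an $n$-th order expansion of $i[H_0,\chi(B)]$ in powers of $s^{-1}$. I would use the Helffer--Sj\"ostrand representation $\chi(B) = \frac{1}{\pi}\int_\Cb \partial_{\bar z}\tilde\chi(z)(z-B)^{-1}\,dz\,d\bar z$ for an almost-analytic extension $\tilde\chi$ of $\chi$, combined with the identity $[H_0,(z-B)^{-1}] = s^{-1}(z-B)^{-1}[H_0,\phi](z-B)^{-1}$ which isolates one power of $s^{-1}$. Iterated resolvent commutator identities then pull out each successive multi-commutator $\ad{p}{\phi}{H_0}$ at the cost of another resolvent and another factor of $s^{-1}$. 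Closing the $z$-integral by functional calculus and invoking the norm bounds in \eqref{commHphi} up to order $n+1$ produces an expansion of the schematic form
\[
i[H_0,\chi(B)] = \tfrac{1}{2}s^{-1}\bigl\{\chi'(B),\,i[H_0,\phi]\bigr\} + \sum_{p=2}^{n} s^{-p}T_p + R_{n+1},
\]
where $\{\cdot,\cdot\}$ denotes the anticommutator, each $T_p$ is a symmetrized product of $\chi^{(p)}(B)$ with $\ad{p}{\phi}{H_0}$, and $\norm{R_{n+1}} \leq C_n\kappa_{n+1}s^{-(n+1)}$. I expect the principal technical obstacle to be the careful symmetrization of this expansion to all $n$ orders so that it fits into an operator-inequality framework, since at each step the error from non-commutativity of $\chi^{(p)}(B)$ with $\ad{p}{\phi}{H_0}$ must be reabsorbed using one further commutator with $\phi$; the top-order bound $\kappa_{n+1}$ is precisely what closes this loop.

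To extract the leading operator bound, I would factor $\chi'(B) = f(B)^2$ with $f \geq 0$ (arranging the class $\cX$ so that such non-negative smooth square roots exist) and use the self-adjointness of $i[H_0,\phi]$ together with $\norm{i[H_0,\phi]} \leq \kappa$ to obtain
\[
\tfrac{1}{2}s^{-1}\bigl\{\chi'(B),\,i[H_0,\phi]\bigr\} \leq \kappa s^{-1}\chi'(B) + O(s^{-2}),
\]
where the $O(s^{-2})$ correction is itself a commutator to be absorbed into the next tier. Combined with the time-derivative piece, this produces $-(c-\kappa)s^{-1}\cA_s(t,\chi') = -\delta s^{-1}\cA_s(t,\chi')$ at leading order. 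For the $s^{-p}$ tier with $2 \leq p \leq n$, I would choose $\xi \in \cX$ such that $\xi'$ dominates a fixed linear combination of $\abs{\chi^{(p)}}$ pointwise on $\Rb$; this is possible because the $\chi^{(p)}$ are finitely many smooth bumps supported inside $\supp\chi' \subset (0,\delta)$, and combined with the uniform bounds on $\ad{p}{\phi}{H_0}$ from \eqref{commHphi} it yields the $Cs^{-2}\cA_s(t,\xi')$ term. The remainder $R_{n+1}$ contributes $Cs^{-(n+1)}$. Applying the evolution $\alpha_t$, which preserves operator inequalities, and adding back the $Cs^{-1}G(t)$ correction from \eqref{HeisDrel}, produces \eqref{RMEproto}.
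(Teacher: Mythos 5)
Your high-level architecture matches the paper's: Helffer--Sj\"ostrand representation, iterated resolvent commutators to expand $i[H_0,\cA_s(\chi)]$ in powers of $s^{-1}$ up to a bounded remainder controlled by $\kappa_{n+1}$, symmetrization, factorization $\chi'=u^2$ with $u=\sqrt{\chi'}\in C_c^\infty$ to control the leading tier, and a final choice of $\xi\in\cX$ absorbing the bumps. Up to that level of description you are on the paper's route.

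The genuine gap is in the treatment of the intermediate tiers $p=2,\dots,n$. You propose to bound the symmetrized term $\tfrac12\{\chi^{(p)}(B),B_p\}$ by choosing $\xi'\gtrsim|\chi^{(p)}|$ and "combining with $\norm{B_p}\le\kappa_p$." But $\chi^{(p)}(B)$ and $B_p$ do not commute, so the inequality $\{\chi^{(p)}(B),B_p\}\le 2\kappa_p\,|\chi^{(p)}|(B)$ is simply false; the available quadratic inequality $\pm(P^*Q+Q^*P)\le P^*P+Q^*Q$ with $P=\chi^{(p)}(B),\,Q=B_p$ produces instead $\chi^{(p)}(B)^2+\kappa_p^2\,\1$, and the constant piece $s^{-p}\kappa_p^2\,\1$ is $O(s^{-2})$ rather than $O(s^{-(n+1)})$ — it cannot be absorbed into $Cs^{-2}\cA_s(\xi')$ (it is not of $\cA_s$-form) nor into $Cs^{-(n+1)}$. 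The paper's fix, which your sketch omits, is to first insert a cutoff $\theta^p$ identically $1$ on $\supp\chi^{(p)}$ and supported in $(0,\delta)$, writing $\cA_s(\chi^{(p)})B_p=\cA_s(\chi^{(p)})\cA_s(\theta^p)B_p$, then commute $\cA_s(\theta^p)$ past $B_p$ by a further commutator expansion to order $n-p$; the cross terms vanish by support disjointness of $\theta^p$ and $(\theta^p)^{(l)}$ with $\chi^{(p)}$, leaving $\cA_s(\chi^{(p)})B_p\cA_s(\theta^p)+O(s^{-(n+1-p)})$. Only after this localization does the quadratic inequality, now applied with $Q=B_p\cA_s(\theta^p)$, yield $\cA_s(\chi^{(p)})^2+\kappa_p^2\cA_s(\theta^p)^2$ — both genuine $\cA_s$-forms of squared bumps in $(0,\delta)$, which $\xi'$ (chosen via property \ref{X1}, dominating squares, not absolute values) can absorb. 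Without this $\theta^p$-insertion the recursive monotonicity estimate would carry an unabsorbable $O(s^{-2})$ constant error and the theorem would fail for $n\ge 2$.

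A smaller imprecision: your choice $\xi'\gtrsim|\chi^{(p)}|$ is in the wrong form even after the fix; the paper needs $\xi_k$ constructed via \ref{X1} from a $w_k^2\gtrsim u^2+(u^{(k-1)})^2+(\chi^{(k)})^2+(\theta^k)^2$, i.e., $\xi_k'$ dominates squares of the relevant bumps, because that is what the quadratic inequality produces. The same localization technique also underlies the "$O(s^{-2})$ correction absorbed into the next tier" step of your leading-term argument; you describe the intent correctly but the mechanism is the same $\theta$-insertion you left out.
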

This theorem is proved in \secref{sec:pfRME}.
\begin{remark}
	Note that for the free evolution with $V(t)\equiv 0$, relation \eqref{HeisDrel} follows with equality immediately from definitions \eqref{Aevol} and \eqref{HeisD}. See discussions in \secref{secExt} and concrete examples in \secref{chapMVENonLoc} in which $V(t)$ is nontrivial.
\end{remark}

\begin{remark}
	The differential inequality \eqref{RMEproto} is `recursive monotone' because the second  term on the r.h.s.~is of the same form as the leading  term, and the latter is non-positive. Notice that \eqref{RMEproto} (more precisely, the proof of it) is the only place where information of the evolution  $\al_t$ is used. The only property we require of the underlying evolution is the differential identity \eqref{HeisDrel}, which  asserts that the velocity $\di_t \al_t(\A_s)$ is approximately determined by the Heisenberg derivative  $D_{H_0}\A_s$ corresponding to the autonomous part $H_0$ in the system Hamiltonian, up to a small time-decaying remainder due to the potential. 
\end{remark}

To prove \thmref{THMRME}, we use the commutator bounds \eqref{commHphi} to derive an expansion formula for $i[H_0,\A_s]$ in terms of the bounded multiple commutators entering \eqref{commHphi}.  Combining this expansion with the explicit form of $\di_t\A_s$ and \eqref{HeisDrel} then yields \eqref{RMEproto}.

\subsubsection{Monotonicity estimate for ASTLOs.}
Since the evolution $\al_t$ is positivity-preserving, control over $\al_t(A_s(t,\chi))$ along $t$ yields the same over $\al_t(P_{c\abs{t}})$ through relations \eqref{chi-0s-est'}--\eqref{chi-ts-est'}. Thus, our goal now is to derive monotonicity estimates for $\al_t(\A_s(t,\chi))$. Indeed, through \eqref{RMEproto}, we can control the growth of $\A_s(t,\chi)$ as follows:
\begin{theorem}[monotonicity estimate]\label{THMME}
	Suppose \eqref{RMEproto} holds for $n\ge1$ and \eqref{commVphi} holds for some $C_V>0$.  
	Then, for any $ c > \kappa$ and  $\chi\in\cX$, there exists $C>0,\,\xi\in\cX$ depending only on $n,c,\chi$ such that for  all  $s>0$, $t\in\Rb$:
	\begin{align}\label{MonoEst}
		\al_t\del{\A_s(t,\chi)}  \le	{\A_s(0,\chi)}+ {Cs^{-1}} {\A_s(0,\xi)} +Cs^{-1}(s^{-n}\abs{t}+C_V) .\tag{ME}
	\end{align}
\end{theorem}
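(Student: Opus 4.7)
The strategy is to integrate \eqref{RMEproto} in time and iterate it $n$ times, using the negative leading term at each step to promote the a~priori $O(s^{-2}|t|)$ error into the desired $O(s^{-n-1}|t|)$. Without loss of generality assume $t>0$; the $t<0$ case is symmetric since \eqref{RMEproto} is stated in terms of $|t|$.

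First I would integrate \eqref{RMEproto} over $[0,t]$. The fundamental theorem of calculus, applied to $r\mapsto\al_r(\A_s(r,\chi))$, combined with the hypothesis $\norm{G}_{L^1}\le C_V$ from \eqref{commVphi}, gives
\begin{align*}
	\al_t(\A_s(t,\chi))\le \A_s(0,\chi)&-\delta s^{-1}\int_0^t\al_r(\A_s(r,\chi'))\,dr\\
	&+Cs^{-2}\int_0^t\al_r(\A_s(r,\eta_1'))\,dr+Cs^{-1}(s^{-n}t+C_V),
\end{align*}
where $\eta_1\in\cX$ is the cutoff produced by \eqref{RMEproto} applied to $\chi$. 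Since $\chi'\ge0$ by the construction of $\cX$ (cf.~\figref{chifig1}) and $\al_r$ is positivity-preserving, the first integral on the right-hand side is nonpositive and can be dropped.

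The core of the argument is the iteration step. Applying \eqref{RMEproto} once more with $\chi$ replaced by $\eta_1$, integrating, and using $\al_t(\A_s(t,\eta_1))\ge 0$ to discard the upper-endpoint contribution, I obtain
\begin{align*}
	\int_0^t\al_r(\A_s(r,\eta_1'))\,dr\le \delta^{-1}s\,\A_s(0,\eta_1)+C\delta^{-1}s^{-1}\int_0^t\al_r(\A_s(r,\eta_2'))\,dr+C\delta^{-1}(s^{-n}t+C_V),
\end{align*}
with $\eta_2\in\cX$ again supplied by \eqref{RMEproto}. Substituting this back lowers the coefficient of the surviving residual integral from $s^{-2}$ to $s^{-3}$, at the cost of a new boundary term $O(s^{-1})\A_s(0,\eta_1)$ and a further error of order $s^{-1}(s^{-n}t+C_V)$. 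Iterating $n$ times produces cutoffs $\eta_1,\ldots,\eta_n\in\cX$, drives the coefficient of the surviving residual integral down to $O(s^{-n-1})$, and accumulates boundary contributions $\sum_{k=1}^{n}C_k s^{-k}\A_s(0,\eta_k)$ together with a total error $Cs^{-1}(s^{-n}t+C_V)$ (the error terms form a geometric series in $s^{-1}$ and sum to $O(1)$ times the first).

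To close, I would bound the final residual integral crudely by $\A_s(r,\eta_n')\le \norm{\eta_n'}_\infty \1$, so that the $O(s^{-n-1})\int_0^t\al_r(\A_s(r,\eta_n'))\,dr$ term is $O(s^{-n-1}t)$ and is absorbed into the error. Lastly, pick $\xi\in\cX$ dominating $\eta_1,\ldots,\eta_n$ pointwise: then $\A_s(0,\eta_k)\le \A_s(0,\xi)$ by functional calculus of $\phi$, and for $s\ge 1$ the accumulated boundary contributions collapse to $Cs^{-1}\A_s(0,\xi)$, yielding \eqref{MonoEst}. The principal obstacle is the bookkeeping inside the class $\cX$: each iterate must remain admissible, and a single $\xi\in\cX$ must majorise all of them. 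This rests on the closure properties of $\cX$ under differentiation and pointwise upper bounds that are built into its definition.
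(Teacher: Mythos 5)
Your proposal is correct and follows essentially the same bootstrap as the paper's proof of Theorem~\ref{THMME'}: integrate \eqref{RMEproto}, drop the non-negative $\delta s^{-1}\int\chi'$ term from one copy of the integrated inequality, reuse the leading negative term (after rearrangement) to bound the residual integral at each stage, and finally collapse the accumulated boundary terms into a single $\xi\in\cX$ via the closure property \ref{X2}. The only organizational difference is that the paper first isolates the auxiliary bound \eqref{propag-est31} on $\int_0^t\chi'[r]\,dr$ and then applies it term by term in \eqref{128}, whereas you substitute directly into the running estimate; the iteration itself, and the role of positivity-preservation and of the $s^{-1}$ gain per step, are identical.
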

This theorem is proved in \secref{sec:pfMonoEst}.

\begin{remark}
	Estimate \eqref{MonoEst} shows that the expectation of $\A_s(t,\chi)$ is bounded by a time-decaying envelope for $\abs{t}\le s$; see Figure \ref{evelPic} below.  Indeed, assume for simplicity $C_V=0$. We evaluate the expectation of both side of \eqref{MonoEst} on a state $\psi\in\cD$ and use the duality $\wndel{\al_t(A)}=\wtdel{A}$ (see \eqref{Aevol}) to find, for $\abs{t}\le s$,
\begin{align}\label{MEstate}
	\wtdel{\A_s(t,\chi)}  \le	\wndel{\A_s(0,\chi)}+C{\abs{t}^{-1}}\wndel{\A_s(0,\xi)}+C\abs{t}^{-n}\norm{\psi_0}^2.
\end{align}

\end{remark}
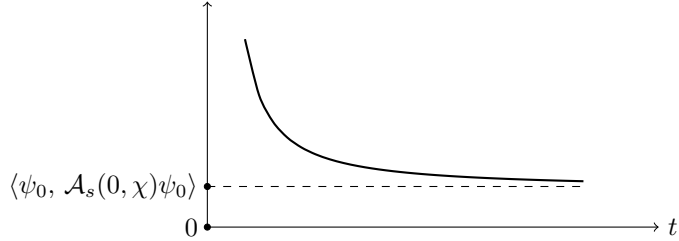
\begin{figure}[H]
	\centering
	\begin{tikzpicture}
		\draw[->] (0, 0) -- (6, 0) node[right] {$t$};
		\draw[->] (0, 0) -- (0, 3);
		\draw[scale=0.5, domain=1:10, smooth, variable=\x, thick] plot ({\x}, {1+ 2/\x+2/(\x*\x)});
		\draw[dashed] (0, .54) -- (5, .54) ;
		\node[left] at (0, .54) {$	\wndel{\A_s(0,\chi)} $};
		\draw [fill] (0, .54) circle [radius=0.04];
		\node[left] at (0, 0) {$0 $};
		\draw [fill] (0, 0) circle [radius=0.04];
	\end{tikzpicture}
	\caption{Schematic diagram illustrating the monotone envelop on the r.h.s. of~\eqref{MEstate}.}\label{evelPic}
\end{figure}


Here we sketch the proof of \eqref{MonoEst} for $t\ge0$. Integrating  \eqref{RMEproto}, dropping certain non-negative terms, and using the $L^1$ bound on $G(t)$ from \eqref{commVphi}, we find
\begin{align}
	\delta s^{-1}\int_0^t{\al_t\del{\A_s(t,\chi')}}\leq& \A_s(0,\chi)+C {s^{-2}{\int_0^t \al_t\del{\A_s\del{t,\xi'}}}+ Cs^{-1}(s^{-n}t+C_V)}, \label{RME1}\\
	\al_t\del{\A_s(t,\chi)}
	\leq&\A_s(0,\chi)+C  {s^{-2}\int_0^t {\al_t\del{\A_s\del{t,\xi'}}}+ Cs^{-1}(s^{-n}t+C_V)}.\label{RME2}
\end{align}
Since the l.h.s.~term in \eqref{RME1} is of the same form as the second term on the r.h.s., we can apply the same estimate to the latter, while introducing another cutoff function, say $\eta$,  in the same class $\cX$ as $\chi$ and $\xi$. Iterating this procedure $n$ times on \eqref{RME1} until no integral is present in the r.h.s., we obtain
\begin{align}\label{RME3}
	\int_0^t{\al_t\del{\A_s(t,\chi')}}	\le C\del{s\A_s(0,\chi)
		+ \A_s(0, \xi)+   \cdots +s^{-(n-2)}\A_s(0, \eta)+s^{-n}t+C_V},
\end{align}
for $n$ cutoff functions $\xi,\ldots, \eta\in\cX$. 
Lastly, we apply \eqref{RME3} to bound the integral in the r.h.s.~of \eqref{RME2} (which carries a prefactor of $s^{-2}$). This, together with some additional algebraic  properties of the ASTLOs,  yields \eqref{MonoEst}. See \secref{sec:pfMonoEst} for detailed derivations.

\subsubsection{Concluding estimate \eqref{eLCgen}.}

\thmref{THMME} establishes the approximate monotonicity for ASTLOs as alluded to in \secref{secIdea}. It remains now to conclude the desired propagation estimate \eqref{eLCgen}.

Using the geometric properties \eqref{chi-0s-est'}--\eqref{chi-ts-est'} 
and  estimate \eqref{MEstate} for $\A_s(t,\chi)$, we obtain
\begin{theorem} \label{THMAP}
	Suppose \eqref{MonoEst} holds for $n\ge1$. 
	Then, for any $ c > \kappa$, there exists $C=C(n,c)>0$ such that \eqref{eLCgen} holds for all $t\ne0$.
	
\end{theorem}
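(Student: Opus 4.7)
\textbf{Proof plan for Theorem \ref{THMAP}.} The plan is to sandwich $\alpha_t(P_{c\abs t})$ between two ASTLOs: bound it from above using the geometric estimate \eqref{chi-ts-est'}, then evolve back to $t=0$ via the monotonicity estimate \eqref{MonoEst}, and finally replace the ASTLOs at $t=0$ by $P_0$ using \eqref{chi-0s-est'}. The only nontrivial degree of freedom we will exercise is the choice of the adiabatic parameter $s$ and the slope of the ASTLO relative to the given $c>\kappa$.

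First, pick an auxiliary slope $c'\in(\kappa,c)$ and a cutoff $\chi\in\cX$ whose derivative is supported in $(0,\delta')$ with $\delta':=c-c'>0$ chosen so that $c'+\delta'=c$. With this choice, the function $\chi$ satisfies $\chi(\mu)\le \one_{\mu>0}$ everywhere and $\chi(\mu)=1$ for $\mu\ge \delta'$. Applied to the self-adjoint operator $s^{-1}(\phi-c'\abs t)$ via functional calculus, these two properties translate via \propref{propGeo} into the sandwich bounds
\begin{align}\label{sandwich-planned}
	\A_s(0,\chi)\le P_0,\qquad P_{c\abs t}=P_{(c'+\delta')\abs t}\le \A_s(t,\chi)\quad(s=\abs t\ne 0),
\end{align}
and similarly for any auxiliary $\xi\in\cX$ of the same kind, $\A_s(0,\xi)\le P_0$. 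Because $\alpha_t$ is positivity preserving (being the Heisenberg evolution of an $L^2$-unitary propagator), applying it to the second inequality in \eqref{sandwich-planned} gives $\alpha_t(P_{c\abs t})\le \alpha_t(\A_s(t,\chi))$.

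Next I would insert the monotonicity estimate \eqref{MonoEst} at slope $c'$, which since $c'>\kappa$ is valid and produces
\begin{align*}
	\alpha_t(\A_s(t,\chi))\le \A_s(0,\chi)+Cs^{-1}\A_s(0,\xi)+Cs^{-1}\bigl(s^{-n}\abs t+C_V\bigr)
\end{align*}
for some $\xi\in\cX$ and $C=C(n,c,\chi)>0$. Specializing to $s=\abs t$ and bounding $\A_s(0,\chi),\A_s(0,\xi)$ by $P_0$ using \eqref{sandwich-planned}, the three remainder terms become, respectively, $C\abs t^{-1}P_0$, $C\abs t^{-1}C_V$, and $C\abs t^{-n}$, so combining yields
\begin{align*}
	\alpha_t(P_{c\abs t})\le P_0+C(P_0+C_V)\abs t^{-1}+C\abs t^{-n},
\end{align*}
which is exactly \eqref{eLCgen}.

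The main (mild) obstacle is coordinating the three parameters $c',\delta',s$ so that the geometric bounds in \eqref{sandwich-planned} are compatible with the final slope $c$. The choice $s=\abs t$ is essentially forced: smaller $s$ would blow up the $s^{-1}$ factors, while larger $s$ would weaken \eqref{chi-ts-est'} (since the buffer $s\delta'$ would exceed $(c-c')\abs t$). Once this tuning is carried out, the rest is a one-line combination of \eqref{MonoEst} and \eqref{chi-0s-est'}--\eqref{chi-ts-est'}.
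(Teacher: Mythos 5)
Your argument is correct and follows essentially the same route as the paper: apply the geometric sandwich bounds of \propref{propGeo} to trade $P_{c|t|}$ and $P_0$ for ASTLOs, invoke \eqref{MonoEst} at an auxiliary slope $c'\in(\kappa,c)$, and then tune the adiabatic parameter $s$ proportional to $|t|$ so that all three error terms collapse into the form of \eqref{eLCgen}. The only (cosmetic) difference is parameter bookkeeping: the paper fixes $\delta=\tfrac13(c-\kappa)$, $c'=\kappa+\delta$, which produces $s=2|t|$, whereas you take $\delta'=c-c'$ and $s=|t|$; both choices satisfy the constraint $s\delta'=(c-c')|t|$ built into \propref{propGeo}, so both work, and your closing remark that $s\sim|t|$ is forced (up to constants, since the constant $C$ in \eqref{MonoEst} degenerates as the support parameter of the cutoff class shrinks) is the right heuristic. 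One small omission: you should note, as the paper does, the degenerate case $\xi\equiv 0$ (then simply drop that term), and that the bound $\A_s(0,\chi)\le P_0$ as written presumes the normalization $\|\chi\|_{L^\infty}=1$; otherwise use \propref{propGeo}'s explicit $\|\eta\|_{L^\infty}^{-1}$ factors.
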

This theorem is proved in \secref{sec:pfMVE}. Note that the statement of \thmref{THMAP} no longer involves the ASTLOs.

\begin{proof}[Proof of \thmref{thmMain}]
Using the relation 
\begin{align}\label{217}
		\di_t\al_t(\A_s(t,\chi))=& \al_t(D_{H_0}\A_s(t,\chi))+\al_t(i[V(t),\A_s(t,\chi)])\notag\\
		\le &  \al_t(D_{H_0}\A_s(t,\chi)) + \norm {[V(t),\A_s(t,\chi) ]} ,
\end{align}
together with Lemma \ref{lem5.5}, which shows $\norm {[V(t),\A_s(t,\chi) ]}\le Cs^{-1}G(t)$ for some absolute constant $C>0$, 
we verify that \eqref{HeisDrel} holds.  
This, together with Thms.~\ref{THMRME}--\ref{THMAP}, yields inequality \eqref{eLCgen} under conditions \eqref{commVphi}--\eqref{commHphi} for $s\ge t$. Thus the proof is complete. 
\end{proof}

\subsection{Discussion}\label{secExt}


We are interested in the spectral localization properties of abstract dispersive evolutions. We have proposed a modular paradigm that accomplishes this goal, as long as the underlying evolution satisfies \eqref{HeisDrel} with a reference operator  satisfying  \eqref{commVphi}--\eqref{commHphi}. 
Our method is geometric in nature and traces back to the line of works by Enss, Hunziker, Sigal, Skibsted, Soffer, and others, who have laid out the foundation of the geometric method for scattering theory of the Schr\"odinger operators (see \cite{HunSig2,HunSig3} for reviews). 

Indeed, the asymptotic localization theory of general dispersive evolutions \eqref{SE}, which we consider here, bears an intrinsic similarity to the scattering theory of the standard Sch\"odinger operators. Both problems concern with the semiclassical behaviour of particles for large time. Notice however that the parameter $s>0$ in \eqref{Astchi}, essentially a semiclassical parameter, does not come with the model \eqref{SE}, but is determined by the problem directly. 
The precise choice of $s$ is given in \eqref{Adef}.

A main technical advantage of our localization theory based on the analysis of ASTLOs lies in its flexibility. Whereas strictly monotone quantities along a given evolution equation are rare to find, the approximately monotone ASTLOs are rather easy to engineer.  One reason is that the underlying evolution does not enter directly into our analysis, but only through assumptions \eqref{commVphi}--\eqref{commHphi} on the commutators between the reference operator $\phi$ and the system Hamiltonian. 

For example, suppose \eqref{SE} is posed on $\Rb^d$ and the  reference operator $\phi$ is a multiplication operator by a function $\phi:\Rb^d\to\Rb$ with $\supp \phi\subset X^\cp$ for some $X\subset \Rb^d$. Then, for generic pseudo-differential operators $\bar V(t)$ satisfying relevant domain conditions and $ V(t):= \1_X \bar V(t) \1_X$, we have $ V(t)\phi = \phi V(t)=0$, and so \eqref{commVphi} is satisfied with $C_V=0$. Consequently, the ASTLOs satisfy the differential identity \eqref{HeisDrel} with equality. Since the evolution only enters our analysis through \eqref{RMEproto} and the latter depends only on \eqref{commHphi} and  \eqref{HeisDrel}, we conclude that \eqref{RMEproto} and all subsequent modular theorems hold.  See Section \ref{chapMVENonLoc} for more details.

Moreover, since the system Hamiltonian  does not enter directly into the main technical assumption \eqref{commHphi}, but only through its commutators with $\phi$ in \eqref{Astchi}, we can derive conditional localization properties when the commutator assumption \eqref{commHphi} fails for the obvious choice of $\phi$.  Consider the case $H_0=-\Lap$ acting on $\Rb^d$. Let $d_X$ be a smoothed distance function to a smooth bounded domain $X$. The obvious choice $\bar \phi=d_X$ does not satisfies \eqref{commHphi}, since  $[-\Lap,d_X]=-\Lap d_X-2\grad d_X\cdot \grad $ is unbounded. However, with an energy cutoff $g=g_E(H_0)$, where $E\in\si(H_0)$ and $g_E$ is a smooth cutoff function supported in $\Rb_{\le E}$, one can check that, with the microlocalized position operator $\phi=g d_X g$, the (microlocal) group velocity  $i[H_0,\phi]$   (together with higher commutators) is bounded. Using this microlocalized version of $\phi$ in \eqref{Astchi} and running the paradigm above,  we obtain energy-dependent spacetime localization estimates for $H_0=-\Lap$.  {See \cite{MVBvNL} for concrete results of this nature, with applications to von-Neuman-Linblad equations in Markovian open quantum dynamics. Related propagation bounds involving microlocal cutoff are obtained in \cite{MR4254070,MR4604685} for linear and nonlinear quantum dynamics involving standard Schr\"odinger operators.}

Lastly, since our method is based on monotonicity estimate in the form of operator inequalities, we can reduce localization theory for quantum many-body problems to the corresponding $1$-body problems. Consider an abstract second quantization map, $\dG$, mapping $1$-body observables $A$ acting on $\hf$ to many-body observables $\hat A$ acting on a Fock space $\cF$ over $\hf$. We assume the map $\dG$ is positive-preserving, i.e., for any self-adjoint $1$-body operators $A$, $B$, 
\begin{align}
	A\le B\implies \hat A\le \hat B, 
\end{align}
and, with $\hat\al_t$ denoting the many-body evolution of  observables on $\cF$, 
\begin{align}
	\dG(\al_t(A))=\hat \al_t(\hat A). 
\end{align}
Then, applying $\dG$ on both sides of \eqref{MonoEst} yields the many-body approximate monotonicity estimate
\begin{align}
	\label{MEMB}
	\hat \al_t\del{\hat \A_s(t,\chi)}  \le	{\hat \A_s(0,\chi)}+C{s^{-1}}\sbr{{\hat \A_s(0,\xi)}+(\abs{t}{s^{-n}+C_V)N} },
\end{align}
where $N=\dG(\one)$ is the number operator. 
{See \cite{FLS1, FLS2,LRSZ,LRZ} for related results  based on this technique for quantum many-body systems arising from condensed matter physics.}

\section{Proofs of Theorems \ref{THMRME}--\ref{THMAP}}\label{chapProofs}
In this section, we proved the main results presented in \secref{secIdea}.

We begin with the  precise definition of \eqref{Astchi}.
Fix $c>0$, together with a densely defined self-adjoint operator $\phi$. For each $s>0$, we define a class of observables by functional calculus:
\begin{equation}\label{chi-ts}
	\fullfunction{\A_s}{\Rb\times L^\infty(\Rb)}{\cB(\hf)}{(t,\chi)}{\chi\del{\frac{\phi-c\abs{t}}{s}}}.
\end{equation} 	
For a parameter $0<\delta<1$, we define a class $\cX\equiv \cX_\delta$ as follows: 
\begin{equation}\label{F}
	\begin{aligned}
		\cX
		:=&\Set{\chi\in C^\infty(\R,\Rb_{\ge0})\left|
			\begin{aligned}
				&\supp \chi\subset (0,\infty) ,\,\chi'\ge0
				,\\&\sqrt{\chi'}\in C_c^\infty,\,\supp \chi'\subset (0,\delta)
			\end{aligned}\right.
		}.
	\end{aligned}
\end{equation}
Then,  for any $s,t$, the operator $\A_s(t,\chi),\,\chi\in\cX$ is bounded on $\hf$ and non-negative definite,  with $\norm{\A_s(t,\chi)}\le \norm{\chi}_{L^\infty}$. Typical examples of functions in $\cX$ are suitably smoothed characteristic functions of $\Rb_{\ge0}$ as in \figref{chifig1}.

In what follows, we will use two properties of the space $\cX$, which can be readily verified:
\begin{enumerate}[label=(X\arabic*)]
	\item \label{X1}  If $\xi(x)=\int_0^x w^2(y)\,dy$ for some $w\in C_c^\infty$ with $\supp w\subset (0,\delta)$, then $\xi\in \cX$.

	\item \label{X2}   For any $\xi_1,\,\xi_2\in\cX$ and $c\ge0 $, there exists $\xi\in\cX$ with $\xi\ge\xi_1+c\xi_2$ and $\xi'\ge \xi_1'+c\xi_2'$.
\end{enumerate} 
In principle, the class $\cX$ could be replaced by suitable classes of functions satisfying the abstract properties \ref{X1}--\ref{X2}.

In view of relation \eqref{HeisDrel}, to prove Theorems \ref{THMRME}--\ref{THMAP}, it suffices to derive an estimate for the Heisenberg derivative $D_{H_0}\A_s(t,\chi)$ associated with the free Hamiltonian $H_0$. Thus, in Sections \ref{sec:pfRME} and \ref{sec:pfMonoEst}, we only work with the free evolution and  write $H\equiv H_0$  and 
\begin{align}
	\label{Heis-der}&D A(t)=\frac{\partial}{\partial t}A(t)+i[H, A(t)],
\end{align}
so that, with $\al_t$ denoting the  unitary evolution generated by $H_0$, the Heisenberg equation \eqref{HE} reads
\begin{align}
	\label{dt-Heis}
	\di_t\al_t(A(t))=\al_t(DA(t)).
\end{align}

\subsection{Proof of {\thmref{THMRME}} } \label{sec:pfRME}

Let $\vec\k:=(\kappa_1,\ldots,\kappa_{n+1})$ as in \eqref{commHphi} and set $\delta:=c-\kappa$. Recall in this subsection $\al_t$ denotes the free evolution and $H\equiv H_0$.  

The main result of this section is the following differential operator inequality:

\begin{theorem}\label{thm:RME} 
Suppose   condition \eqref{commHphi} holds for some $n\ge1$.  
	Then, for all $c>\kappa$ and   $\chi\in \mathcal \cX$, there exists a constant
	$C>0$ and functions $\xi_k\in\cX,\,k=2,\ldots, n$ (dropped if $n=1$) depending only on $n,\,\vec\k$, and $\chi$, such that for all $t\in\Rb,\,s>0$, the following operator inequality holds on $\hf$:
	\begin{align}\label{rme}
		&\di_t\al_t\del{\A_s(t,\chi)}\leq -\delta s^{-1}\al_t\del{\A_s(t,\chi')} + \sum_{k=2}^n s^{-k}\al_t\del{\A_s\del{t,\xi_k'}}+ C{s^{-(n+1)}}.
	\end{align}
(The sum in the r.h.s. is dropped if $n=1$.)
\end{theorem}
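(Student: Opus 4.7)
The plan is to expand the Heisenberg derivative $D\A_s(t,\chi) = \partial_t \A_s(t,\chi) + i[H_0, \A_s(t,\chi)]$ in powers of $s^{-1}$, extract a manifestly negative principal term, and absorb the remainder into correctors of the form $\A_s(t, \xi_k')$ plus an $O(s^{-(n+1)})$ error. By the symmetry $H_0 \mapsto -H_0$ (which preserves \eqref{commHphi}), it suffices to treat $t \ge 0$, where functional calculus immediately yields the favourable identity $\partial_t \A_s(t,\chi) = -(c/s)\A_s(t,\chi')$.

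The workhorse is a Taylor-type commutator expansion, presumably the content of \secref{chapPre}:
$$i[H_0, \chi(\phi_s)] \;=\; \frac{1}{s}T_1 \;+\; \sum_{k=2}^{n}\frac{1}{s^k}T_k \;+\; R_{n+1}, \qquad \phi_s := \tfrac{\phi - ct}{s},$$
where each $T_k = T_k^*$ is a symmetrically ordered combination of $\chi^{(k)}(\phi_s)$ with $\ad{k}{\phi}{H_0}$, and $\|R_{n+1}\| \le C_{\chi,\vec\kappa}\, s^{-(n+1)}$ by \eqref{commHphi} applied at order $p=n+1$. The symmetric ordering is arranged by iteratively pushing non-symmetric remainders into the next order of the series. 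After symmetrization, $T_1 = \tfrac{1}{2}\{\chi'(\phi_s),\, i[H_0,\phi]\}$.

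The sign extraction from $T_1$ exploits the defining property of $\cX$: since $w := \sqrt{\chi'} \in C_c^\infty$, we may write $\chi' = w^2$, whence
$$T_1 \;=\; w(\phi_s)\, i[H_0,\phi]\, w(\phi_s) \;+\; e_1,$$
with $e_1$ a double commutator of order $s^{-1}$, controlled via \eqref{commHphi} at $p=2$. The central sandwich obeys the operator inequality $w\cdot K\cdot w \le \|K\|\, w^2$ for self-adjoint $K = i[H_0,\phi]$ with $\|K\|\le \kappa$; combined with $\partial_t\A_s(t,\chi)$ this gives the desired principal bound $\partial_t\A_s + s^{-1}T_1 \le -\delta s^{-1}\A_s(t,\chi') + O(s^{-2})$, with $\delta = c-\kappa > 0$.

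For each subleading contribution $s^{-k}T_k$ ($k\ge 2$), together with $e_1/s$ and the various symmetrization errors, we have the schematic form $s^{-k}\chi^{(k)}(\phi_s)\, K_k$ with $\|K_k\|\le \kappa_k$. Since $\chi^{(k)}\in C_c^\infty((0,\delta))$, property \ref{X1} produces $\xi_k\in\cX$ with $\xi_k' = v_k^2$ for a smooth $v_k \ge 0$ supported in $(0,\delta)$ satisfying $v_k^2 \ge |\chi^{(k)}|$; the same sandwich trick then yields $s^{-k}T_k \le C_k\, s^{-k}\A_s(t,\xi_k')$, and property \ref{X2} merges the finitely many correctors at each order into a single $\xi_k$. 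Applying $\al_t$ to the resulting operator inequality and invoking the Heisenberg equation \eqref{dt-Heis} together with positivity-preservation of $\al_t$ yields \eqref{rme}. The main obstacle is the commutator expansion itself: producing each $T_k$ in a symmetric sandwich form, with a remainder $R_{n+1}$ quantitatively controlled by $\kappa_{n+1}$ and finitely many norms of $\chi$—this is precisely the point at which the full $(n+1)$-th order bound in \eqref{commHphi} is exploited.
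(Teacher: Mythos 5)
Your overall architecture---the Taylor/Helffer--Sj\"ostrand commutator expansion with remainder controlled by $\kappa_{n+1}$, symmetrization, the substitution $\chi'=w^2$ with the sandwich bound $\pm\, w(\phi_s)Kw(\phi_s)\le\|K\|\,w^2(\phi_s)$ for the leading term, and the final merging of correctors via \ref{X1}--\ref{X2} before applying the positivity-preserving $\al_t$---is the same as the paper's. The gap is in your treatment of the subleading terms $k\ge2$. After symmetrization these have the form $\tfrac12\bigl(\A_s(\chi^{(k)})B_k+B_k^*\A_s(\chi^{(k)})\bigr)$ with $B_k=\ad{k}{\phi}{H_0}$: this is a symmetrized \emph{product}, not a sandwich, and for $k\ge2$ the function $\chi^{(k)}$ changes sign, so it has no square root and ``the same sandwich trick'' simply does not apply. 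Nor does $v_k^2\ge|\chi^{(k)}|$ together with $\|B_k\|\le\kappa_k$ suffice: Cauchy--Schwarz, $\pm(P^*Q+Q^*P)\le P^*P+Q^*Q$ with $P=\A_s(\chi^{(k)})$, $Q=B_k$, leaves the constant, non-localized term $\|B_k\|^2\1$, which at order $s^{-k}$ can be absorbed neither into $s^{-k}\A_s(t,\xi_k')$ nor into $Cs^{-(n+1)}$, and reweighting the Cauchy--Schwarz cannot repair this. In fact the inequality you assert, $s^{-k}\bigl(\A_s(\chi^{(k)})K+K^*\A_s(\chi^{(k)})\bigr)\le C s^{-k}\A_s(t,\xi_k')+Cs^{-(n+1)}$, is \emph{false} for a general self-adjoint $K$ with $\|K\|\le\kappa_k$: test it on $\psi_1+\lambda\psi_2$ with $\psi_1$ spectrally supported (w.r.t.\ $\phi$) where $\chi^{(k)}((\cdot-c|t|)/s)$ is of order one, $\psi_2$ spectrally far away, and $K$ a partial isometry coupling the two regions; the left side grows linearly in $\lambda$ while the right side is a constant plus $Cs^{-(n+1)}\lambda^2$, and the choice $\lambda\sim s^{(n+1)/2}$ with $s$ large violates the bound. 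So this step must use the higher commutator bounds \eqref{commHphi} a second time, not just $\|B_k\|\le\kappa_k$.

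The paper's repair (proof of \lemref{lem5.1}, Step 3) is an additional localization: pick $\theta^k\in C_c^\infty$ with $\supp\theta^k\subset(0,\delta)$ and $\theta^k\equiv1$ on $\supp\chi^{(k)}$, write $\A_s(\chi^{(k)})B_k=\A_s(\chi^{(k)})\A_s(\theta^k)B_k$, and expand $[\A_s(\theta^k),B_k]$ to order $n-k$; the expansion terms $\A_s(\chi^{(k)})\A_s((\theta^k)^{(l)})B_{k+l}$ vanish because $\supp(\theta^k)^{(l)}\cap\supp\chi^{(k)}=\emptyset$ for $l\ge1$, and the remainder is $O(s^{-(n+1-k)})$. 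This brings the term into the form $\A_s(\chi^{(k)})B_k\A_s(\theta^k)+\mathrm{h.c.}$ up to admissible errors, and only then does Cauchy--Schwarz give $\A_s((\chi^{(k)})^2)+\|B_k\|^2\A_s((\theta^k)^2)$, both localized in $(0,\delta)$ and hence dominated through \ref{X1}--\ref{X2} by $\A_s(t,\xi_k')$. The same care is needed for your $e_1$ and the ``symmetrization errors'' from the leading term: a bare estimate $s^{-1}\|e_1\|=O(s^{-3})$ is not $O(s^{-(n+1)})$ once $n\ge3$, so these double commutators must themselves be expanded to order $n$ (the paper does this by applying the commutator expansion to $u=\sqrt{\chi'}$, yielding terms $\A_s(u)B_{1+l}\A_s(u^{(l)})+\mathrm{h.c.}$ whose Cauchy--Schwarz bounds are again localized) rather than estimated in norm. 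With these two repairs your argument coincides with the paper's proof.
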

This theorem is proved at the end of this section. Estimate \eqref{rme}, together with property \ref{X2} and relation \eqref{HeisDrel}, implies \thmref{THMRME}.

\begin{remark}\label{rem:Ham}
	{Identity \eqref{dt-Heis} plays a crucial role in our analysis, and it is precisely in \eqref{dt-Heis} that the Hamiltonian structure of \eqref{1.1} is used. Indeed, for a heat-type equation $\di_t u=-Hu$ with self-adjoint $H$, we have formally, instead of \eqref{dt-Heis}, 
		$$\di_t \al_t(A(t))=\al_t(\di_t A(t) - \Set{H,A(t)}),$$
		where the brace denotes the anti-commutator. The change $[\cdot,\cdot]\to \Set{\cdot,\cdot}$ renders key expansion formulae below unavailable, and thus new machinery is needed to handle heat type equations. We will not seek to pursue this problem presently.}
\end{remark}

We begin with  the following lemma:
\begin{lemma}
\label{lem5.1}
Suppose the assumption of \thmref{thm:RME} holds.
Then there exist $\xi_k=\xi_k(n,\vec\k,\chi)\in\cX,\,k=2,\ldots, n$ (dropped if $n=1$),
together with a constant  $C=C(n,\vec\k,\chi)>0,$  such that the following operator inequality holds on $\hf$:
\begin{equation}
	\label{4.4}
	\begin{aligned}
		&i[H,\A_s(t,\chi)]\le s^{-1}\kappa\A_s(t,\chi')+ \sum_{k=2}^{n}s^{-k}\A_s(t,\xi_k')+ C{s^{-(n+1)}}\quad(t\in\Rb,s>0).
	\end{aligned}
\end{equation}
(The sum in the r.h.s. is dropped if $n=1$.)
\end{lemma}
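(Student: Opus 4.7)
The plan is to expand $i[H,\A_s(t,\chi)]$ as a formal series in $s^{-1}$ by an iterated commutator reduction, and then bound each term by a non-negative observable of the form $\A_s(t,\xi_k')$ with $\xi_k\in\cX$, up to a scalar remainder of order $s^{-(n+1)}$. Set $B_s:=(\phi-c\abs{t})/s$, so that $\A_s(t,\chi)=\chi(B_s)$ by functional calculus. The key inputs are \eqref{commHphi} through order $n+1$, which control iterated commutators of $H$ with $\phi$ (and hence with $B_s$).

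The first step is to derive (or invoke from Section \ref{chapPre}) an expansion
\begin{equation*}
i[H,\chi(B_s)] \;=\; \sum_{k=1}^{n}\frac{1}{k!\,s^{k}}\,\Psi_k(t) \;+\; R_n(t,s),
\end{equation*}
where each $\Psi_k(t)$ is a self-adjoint operator built symmetrically from $\chi^{(k)}(B_s)$ and $i^{k}\ad{k}{\phi}{H}$, and $\norm{R_n(t,s)}\le C s^{-(n+1)}$ with $C$ depending only on $n$, $\vec\k$, and finitely many derivatives of $\chi$. The natural route is the Helffer--Sjöstrand almost-analytic extension of $\chi$ combined with the iterated resolvent identity $[H,(z-B_s)^{-1}]=(z-B_s)^{-1}[H,B_s](z-B_s)^{-1}$; each iteration extracts a factor of $s^{-1}$ and replaces $[H,\phi]$ by a higher multiple commutator, with the residue after $n$ iterations bounded via $\kappa_{n+1}=\norm{\ad{n+1}{\phi}{H}}$.

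The second step turns this expansion into the operator inequality \eqref{4.4}. For $k=1$, $\chi'\ge 0$ and $w:=\sqrt{\chi'}\in C_c^\infty$ by the definition of $\cX$. After rewriting $\Psi_1$ as the symmetric sandwich $w(B_s)\,i[H,\phi]\,w(B_s)$ (the asymmetric cross-terms contributing $O(s^{-1})$ corrections absorbed into the $k=2$ level), and using that $i[H,\phi]$ is self-adjoint with $\norm{i[H,\phi]}\le\kappa_1=\kappa$ by \eqref{commHphi}, one gets
\begin{equation*}
s^{-1}\,w(B_s)\,i[H,\phi]\,w(B_s) \;\le\; s^{-1}\kappa\,w(B_s)^{2} \;=\; s^{-1}\kappa\,\A_s(t,\chi'),
\end{equation*}
which is the leading contribution. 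For each $2\le k\le n$, I choose $w_k\in C_c^\infty$ with $\supp w_k\subset(0,\delta)$ and $\abs{\chi^{(k)}}\le w_k^{2}$ pointwise---possible because $\chi^{(k)}$ has compact support in $(0,\delta)$---and set $\xi_k(x):=\int_0^{x}w_k^{2}$, so that $\xi_k\in\cX$ by property \ref{X1} with $\xi_k'=w_k^{2}$. Functional calculus then yields the operator inequality $\abs{\chi^{(k)}(B_s)}\le w_k^{2}(B_s)=\A_s(t,\xi_k')$, and combined with $\norm{\ad{k}{\phi}{H}}\le\kappa_k$ and symmetrization of $\Psi_k$, I obtain $\pm\Psi_k\le C_k\,\A_s(t,\xi_k')$. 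Property \ref{X2} is invoked to absorb the $O(s^{-2})$ corrections from symmetrizing $\Psi_1$ into the $k=2$ term, and $R_n$ contributes the scalar $Cs^{-(n+1)}$.

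The main obstacle is the commutator expansion of Step 1: proving that after $n$ iterations the residual operator is genuinely $O(s^{-(n+1)})$ in operator norm requires careful combinatorial tracking of the Helffer--Sjöstrand contour integral (or equivalently the telescoping of resolvent identities), together with the crucial point that the $(n{+}1)$-fold commutator factor $\ad{n+1}{\phi}{H}$ is bounded by $\kappa_{n+1}$ from \eqref{commHphi}. Once this expansion is in place---which is precisely what Section \ref{chapPre} is set up to supply---the rest of the proof of \lemref{lem5.1} reduces to the functional-calculus manipulations above.
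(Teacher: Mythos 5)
Your overall strategy --- Helffer--Sj\"ostrand commutator expansion, symmetrization, and functional-calculus localization --- is essentially the one the paper uses, and your treatment of the leading term $\Psi_1$ via $w=\sqrt{\chi'}$ is sound. However, there is a genuine gap in the $k\ge 2$ step. You claim that $\abs{\chi^{(k)}(B_s)}\le \A_s(t,\xi_k')$ together with $\norm{B_k}\le\kappa_k$ and ``symmetrization of $\Psi_k$'' yields $\pm\Psi_k\le C_k\,\A_s(t,\xi_k')$. This does not follow: if $A$ and $M$ are commuting self-adjoint operators with $\abs{A}\le M$ and $B$ is bounded self-adjoint, one does \emph{not} in general have $\pm(AB+BA)\le CM$ (e.g.~on $\Cb^2$ take $A=M=\diag(1,0)$ and $B$ the off-diagonal flip matrix, so that $AB+BA=B$ is not dominated by any multiple of $M$). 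The only bound available from \eqref{op-esti} with $P=\A_s(\chi^{(k)})$ and $Q=B_k$ is $\pm\Psi_k\le \tfrac12\A_s((\chi^{(k)})^2)+\tfrac12 B_k^*B_k$, and the second summand is a scalar multiple of the identity, not a localized observable. Since it appears with prefactor $s^{-k}$, it contributes an error of order $s^{-k}$ for $2\le k\le n$, which for $k\le n$ is strictly larger than the allowed $s^{-(n+1)}$ remainder.

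The paper circumvents this by inserting, for each $k\ge 2$, a cutoff $\theta^k\in C_c^\infty$ with $\theta^k\equiv1$ on $\supp\chi^{(k)}$ and $\supp\theta^k\subset(0,\delta)$, and then running a \emph{second} commutator expansion to obtain $\A_s(\chi^{(k)})B_k=\A_s(\chi^{(k)})B_k\A_s(\theta^k)+O(s^{-(n+1-k)})$ (see \eqref{526}--\eqref{531}). Applying \eqref{op-esti} with $P=\A_s(\chi^{(k)})$, $Q=B_k\A_s(\theta^k)$ then yields $Q^*Q=\A_s(\theta^k)B_k^*B_k\A_s(\theta^k)\le\norm{B_k}^2\A_s((\theta^k)^2)$, which \emph{is} a localized observable with symbol supported in $(0,\delta)$, so that both $P^*P$ and $Q^*Q$ can be absorbed into $\A_s(\xi_k')$ by choosing $w_k^2\gtrsim(\chi^{(k)})^2+(\theta^k)^2$ as in \eqref{wChoice}. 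Your proposal omits this second expansion and the accompanying cutoff; without them the symmetric form $\Psi_k$ cannot be dominated by a localized observable to the required order.
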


\begin{proof}
Within this proof, we fix $t$ and write $\A_s(\chi)\equiv \A_s(t,\chi)$.  Also, we set $B_k\equiv\pm i\ad{k}{\phi}{H}$ for $k=1,...,n+1$. (The sign is irrelevant for our argument.) 

1. By condition	\eqref{commHphi}, there exists 	$C=C(n,\vec\k)>0,$ such that\begin{align}\label{BCond}
	\norm{B_k}\le C,\quad k=1,\ldots,n+1.
\end{align} This, together with the definition of $\cX$ (see \eqref{F}), implies that the hypotheses of \lemref{lemA.2}  are satisfied for $\chi\in\cX$, and so there hold the commutator expansion
\begin{align}  [H, \A_s(\chi)]= 
	\sum_{k=1}^n \frac{s^{- k}}{k!}\A_s(\chi^{(k)}) B_k + s^{-(n+1)}R_{n+1},\label{commex'} 
\end{align}
 with some 	$C=C(n,\vec\k,\chi)>0$ such that (c.f.~\eqref{commex}--\eqref{4.A.21})
\begin{align}\label{RemEstEst}
	\norm{R_{n+1}}\le C.
\end{align}
Adding commutator expansion \eqref{commex'} to its adjoint and dividing the result by two, we obtain
\begin{align}
	i[H,\A_s(\chi)]=&\mathrm{I}+\mathrm{II}+\mathrm{III},\label{gvExp}\\
	\mathrm{I}	=&	\frac{1}{2}s^{-1} \left(\A_s(\chi')B_1+B_1^*\A_s(\chi')\right),\label{leading}
	\\	\mathrm{II}=&\frac{1}{2}\sum_{k=2}^{n}\frac{s^{-k}}{k!}\left(\A_s(\chi^{(k)})B_k+B_k^*\A_s(\chi^{(k)})\right),\label{A-sym-exp}
	\\\mathrm{III}=&\frac{1}{2}s^{-(n+1)}\left(R_{n+1}+R_{n+1}^*\right)\label{Rem3},
\end{align}
where the term $\mathrm{II}$ is dropped for $n=1$.

2. We first bound the term $\mathrm{I}$ in line \eqref{leading}.  Let $u:=\sqrt{\chi'}$, which is well defined and lies in $C_c^\infty(\Rb)$ by \eqref{F}. Then, by \eqref{BCond} and \lemref{lemA.2}, expansion \eqref{commex'} also holds for $u$. This expansion, together with the fact that $\Ad_{\phi}^l(B_k)=B_{k+l}$, implies
\begin{align}
	&\quad	\A_s(\chi')B_1+B_1^*\A_s(\chi')\notag
	\\	&=\A_s(u)^2B_1+B_1\A_s(u)^2\notag\\&=2\A_s(u)B_1\A_s(u)+\A_s(u)[\A_s(u),B_1]+[B_1,\A_s(u)]\A_s(u)\nonumber\\
	&=2\A_s(u)B_1\A_s(u)\notag\\&\quad+\sum_{l=1}^{n-1}\frac{s^{-l}}{l!}\left( {\A_s(u)B_{1+l} \A_s(u^{(l)})+\A_s(u^{(l)})B_{1+l}^{*}\A_s(u)}\right)\label{518}\\&\quad+s^{-n}(\A_s(u)\Rem_1+\Rem_1^{*}\A_s(u)),\label{519}
\end{align}
where  line \eqref{518} is dropped for $n=1$ and,  for some 	$C=C(n,\vec\k,\chi)>0$,
\begin{align}\label{Rem1Est}
	\norm{\Rem_1}\le C.
\end{align}

We will bound the terms in \eqref{518}--\eqref{519} using the operator estimate
\begin{align}\label{op-esti}
	\pm (P^*Q+Q^*P)&\leq P^*P+Q^*Q. 
\end{align}
For the terms in line \eqref{518}, we use \eqref{op-esti} with
\begin{align}
	P=\A_s(u),\quad Q:= B_{1+l}\A_s(u^{(l)}),\quad   l=1,\ldots,n-1 , 
\end{align}
yielding 
\begin{align}\label{Exp1}
	&s^{-l}({\A_s(u)B_{1+l} \A_s(u^{(l)})+\A_s(u^{(l)})B_{1+l}^{*}\A_s(u)})\notag\\\le& s^{-l}\del{  \A_s(u)^2+ \|B_{1+l}\|^2(\A_s(u^{(l)}))^2}.
\end{align}
For the remainder terms in \eqref{519}, we apply \eqref{op-esti} with \begin{align}
	P=\A_s(u),\quad Q=\Rem_1,
\end{align} to obtain
\begin{align}\label{Exp2}
	s^{-n}(\A_s(u)\Rem_1+\Rem_1^{*}\A_s(u))&\leq s^{-n}\del{\A_s(u)^{2}+\|\Rem_1\|^2 }.
\end{align}
Combining \eqref{Exp1} and \eqref{Exp2} in \eqref{leading} yields 
\begin{align}\label{Exp4}
	&\mathrm{I}\le  s^{-1} \A_s(u)B_1\A_s(u)\\&\quad+\frac12\sum_{l=1}^{n-1}\frac{s^{-(l+1)}}{l!}\del{  \A_s(u)^2+\notag \|B_{1+l}\|^2(\A_s(u^{(l)}))^2}+\frac12s^{-(n+1)}\|\Rem_1\|^2 .
\end{align}
This bound the term $\mathrm{I}$ \eqref{leading}.

%

3. For $n\ge2$, the term $\mathrm{II}$ in line \eqref{A-sym-exp}  is bounded similarly as in Step 2. For $k=2,...,n$, we take $\theta^k\in C_c^\infty(\Rb)$ with
\begin{align}
	\label{thetaCond}
	\supp\theta^k\subset (0,\delta),\quad \theta^k\equiv 1 \text{ on }\supp\chi^{(k)}.
\end{align}
{We claim that for some bounded operator $\Rem_k=O(1)$, 
	\begin{align}\label{526}
		&s^{-k}\del{\A_s(\chi^{(k)})B_k+B_k^*\A_s(\chi^{(k)})}\notag\\=& s^{-k}\del{\A_s(\chi^{(k)})B_k\A_s(\theta^k)+\A_s(\theta^k)B_k^*\A_s(\chi^{(k)})}+s^{-(n+1)}\Rem_k.
	\end{align}
	For this, it suffices to show that 
	\begin{align}\label{527}
		\A_s(\chi^{(k)})B_k=\A_s(\chi^{(k)})B_{k}\A_s(\theta^k)+ s^{-(n+1-k)}\Rem_k.
	\end{align}
	Using relation \eqref{thetaCond}, commutator expansion \eqref{commex'},  and the fact that $\Ad_{\phi}^l(B_k)=B_{k+l}$, we have
	\begin{align}
		&\quad\A_s(\chi^{(k)})B_k\notag\\&=\A_s(\chi^{(k)})\A_s(\theta^k)B_k\notag\\&=\A_s(\chi^{(k)})B_{k}\A_s(\theta^k)+\A_s(\chi^{(k)})[\A_s(\theta^k),B_k]\nonumber\\
		&=\A_s(\chi^{(k)})B_k\A_s(\theta^k)\notag\\&\quad+\sum_{l=1}^{n-k}\frac{s^{-l}}{l!}\A_s(\chi^{(k)})\A_s((\theta^k)^{(l)})B_{k+l}+s^{-(n+1-k)}\A_s(\chi^{(k)})\Rem_k,\label{531}
	\end{align}
	where the $l$-sum is dropped for $k=n$ and \begin{align}\label{RemkEst}
		\Rem_k\le C,\quad k=2,\ldots,n,
	\end{align}
	for some 	$C=C(n,\vec\k,\chi)>0$.
	
	Since $\theta^k\equiv 1$ on $\supp(\chi^{(k)})$, we have $\supp((\theta^k)^{(l)})\cap\supp(\chi^{(k)})=\emptyset$ for all $l\geq 1$ and so in line \eqref{531}, 
	\begin{align*}
		\A_s(\chi^{(k)}) \A_s((\theta^k)^{(l)})B_{k+l}=0,\quad l=1,\ldots n-k.
	\end{align*}
	Estimate \eqref{527} follows from here. Thus we conclude claim \eqref{526}.
	
	Now, we apply estimate \eqref{op-esti} on the first term on the r.h.s. of \eqref{526} with \begin{align}
		P=\A_s(\chi^{(k)}),\quad Q=B_k\A_s(\theta^k),
	\end{align} and then sum over $k$ to obtain
	\begin{align}\label{Exp3}
		\mathrm{II}	\leq&\frac12\sum_{k=1}^{n-1} \frac{s^{-k}}{k!}\left((\A_s(\chi^{(k)}))^2+\|B_k\|^2(\A_s(\theta^k))^2\right)+\frac12s^{-(n+1)}\norm{\Rem_k}^2.
	\end{align}
	This bounds the term  $\mathrm{II}$ in line \eqref{A-sym-exp}.
}


4.	Plugging \eqref{Exp4}, \eqref{Exp3} back to \eqref{gvExp} and using bounds \eqref{BCond}, \eqref{RemEstEst}, \eqref{Rem1Est}, and \eqref{RemkEst}, we find that   for some 	$C=C(n,\vec\k,\chi)>0$, 
\begin{align} 
	&i[H,\A_s(\chi)] \le   s^{-1}\A_s(u)B_1\A_s(u)\label{535}\\&+ C\sum_{k=2}^{n}{s^{-k}} \del{  \A_s(u)^2+(\A_s(u^{(k-1)}))^2+\A_s(\chi^{(k)}))^2+(\A_s(\theta^k))^2} +Cs^{-(n+1)}.\notag
\end{align}
Now, for $k=2,\ldots,n$, we choose, with $C$, $u$, $\theta^k$ from \eqref{535},
\begin{align}
	&w_k\in C_c^\infty, \quad  \supp w_k\subset (0,\delta),\notag\\  w_k^2\ge &C \del{u^2+ (u^{(k-1)})^2+(\chi^{(k)})^2+(\theta^k)^2},
	\label{wChoice}\end{align}
which is possible since the r.h.s.~of \eqref{wChoice} is supported in $(0,\delta)$ by construction. Then the function
\begin{align}\label{xiChoice}
	\xi_k(x):=\int_0^x w_k^2(y)\,dy
\end{align}
lies in $\cX$ by identity \ref{X1}. Thus, by \eqref{535}, the desired estimate \eqref{4.4} holds with the choice of $\xi_k$ from \eqref{xiChoice}.
This completes the proof of \lemref{lem5.1}.
\end{proof}

\begin{proof}[Proof of \thmref{thm:RME}]
To prove estimate \eqref{rme}, we first apply the differential identity \eqref{dt-Heis} with $A(t)=\A_s(t,\chi)$ for each $s,\,\chi$. This yields
\begin{equation}\label{121}
	\di_t\al_t(\A_s(t,\chi))=\al_t(\di_t\A_s(t,\chi))+\al_t(i[H,\A_s(t,\chi)]).
\end{equation}
By definition \eqref{chi-ts}, we find
\begin{align} \label{eq:deriv}
	\di_t\A_s(t,\chi)=-s^{-1}c\,  \A_s(t,\chi').
\end{align}
By estimate \eqref{4.4}, we find 
\begin{align}\label{122}
	i[H,\A_s(t,\chi)] &\le  s^{-1}\kappa  \A_s(t,\chi') +  \sum_{k=2}^ns^{-k}\A_s(t,\xi_k')+C{s^{-(n+1)}},
\end{align}
where $C=C(n,\vec\k,\chi)>0$ and the second term in the r.h.s. is dropped for $n=1$.  	Plugging \eqref{eq:deriv} and \eqref{122} back to \eqref{121} and using the positive-preserving property of evolution $\al_t$  yields \eqref{rme}.
\end{proof}

%

\subsection{Proof of \thmref{THMME}}\label{sec:pfMonoEst}
Recall in this subsection $\al_t$ denotes the free evolution and $H\equiv H_0$. 
Our main result is the following:

	\begin{theorem} \label{THMME'}
		Suppose \eqref{rme} holds.  
		Then there exist 
		$C>0$ and a function  $\xi\in\cX$ (dropped if $n=1$) depending only on $n,\,\vec\k$, $\chi$, and $\delta$ such that for all $t\in\Rb,\,s>0$, the following operator inequality holds on $\hf$:
		\begin{align}\label{336}
\al_t\del{\A_s(t,\chi)}  \le	{\A_s(0,\chi)}+ {s^{-1}}{\A_s(0,\xi)}+C\abs{t}s^{-(n+1)}.
		\end{align}
		(The second term on the r.h.s. is dropped for $n=1$.)

	\end{theorem}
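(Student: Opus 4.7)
Assume $t\ge 0$; the case $t<0$ is symmetric since $\A_s(t,\chi)$ depends only on $\abs{t}$. My strategy is to integrate the recursive monotonicity estimate \eqref{rme} in time and then iteratively remove the resulting integral remainders. Integrating \eqref{rme} over $[0,r]$ gives two complementary inequalities: dropping the (non-positive) leading term on the right of \eqref{rme} yields, for $r\ge 0$,
\begin{equation}\label{eq:planA}
\al_r\del{\A_s(r,\chi)} \le \A_s(0,\chi) + \sum_{k=2}^{n} s^{-k}\int_0^r \al_{r'}\del{\A_s(r',\xi_k')}\,dr' + Crs^{-(n+1)},
\end{equation}
while moving that term to the left instead (using $\delta=c-\kappa>0$) gives
\begin{equation}\label{eq:planB}
\delta s^{-1}\int_0^r \al_{r'}\del{\A_s(r',\chi')}\,dr' \le \A_s(0,\chi) + \sum_{k=2}^{n} s^{-k}\int_0^r \al_{r'}\del{\A_s(r',\xi_k')}\,dr' + Crs^{-(n+1)}.
\end{equation}
Crucially, \eqref{eq:planB} holds with any element of $\cX$ in place of $\chi$; in particular, I may apply it with any primitive $\tilde\xi_k\in\cX$ of $\xi_k'$, and such primitives exist by property (X1), since by construction $\xi_k'=w_k^2$ for some $w_k\in C_c^\infty$ supported in $(0,\delta)$.

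Next, I iterate \eqref{eq:planB}: applied to a term $s^{-k}\int_0^r\al_{r'}(\A_s(r',\psi'))$ it produces a boundary contribution of order $s^{-(k-1)}\A_s(0,\tilde\psi)$, new integrals whose prefactors $s^{-(k+k'-1)}$ with $k'\ge 2$ are strictly smaller than $s^{-k}$, plus a tail of order $rs^{-(n+k)}$. After at most $n-1$ such rounds starting from the integrals in \eqref{eq:planA}, every surviving integral carries a prefactor no larger than $s^{-(n+1)}$ and, being bounded as an operator inequality by $\norm{\chi}_{L^\infty}\one$, contributes at most $Cts^{-(n+1)}$ after integration over $[0,t]$. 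The accumulated boundary terms form a finite sum $\sum_{j\ge 1}s^{-j}\A_s(0,\eta_j)$ with $\eta_j\in\cX$; by iterated use of property (X2) I construct a single $\hat\xi\in\cX$ such that $\sum_{j\ge 1}s^{-j}\A_s(0,\eta_j)\le s^{-1}\A_s(0,\hat\xi)$. Altogether this gives
\begin{equation}\label{eq:planC}
\int_0^t\al_r\del{\A_s(r,\chi')}\,dr \le Cs\,\A_s(0,\chi) + C\A_s(0,\hat\xi) + Cts^{-n}.
\end{equation}

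Finally, I substitute \eqref{eq:planC} (applied with each $\tilde\xi_k$ in place of $\chi$) back into \eqref{eq:planA}. Since each integral enters with prefactor $s^{-k}$, $k\ge 2$, the boundary terms of \eqref{eq:planC} appear at the $s^{-(k-1)}$ level, with dominant contribution $s^{-1}\A_s(0,\tilde\xi_2)$, while the residual $Cts^{-n}$ is upgraded to $Cts^{-(n+k)}\le Cts^{-(n+1)}$. A last application of property (X2) consolidates all accumulated boundary cutoffs into a single $\xi\in\cX$, yielding \eqref{336}.

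\textbf{Main obstacle.} The principal difficulty is combinatorial rather than analytic: at each recursion step new cutoffs in $\cX$ are produced, and one must verify through properties (X1) and (X2) that suitable non-negative primitives exist in $\cX$ and that any finite collection of boundary terms is dominated by a single element of $\cX$. The sign condition $c>\kappa$, encoded in $\delta>0$, is essential for moving the leading term of \eqref{rme} to the left in \eqref{eq:planB}; without it the iteration cannot close.
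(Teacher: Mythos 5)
Your proposal follows essentially the same scheme as the paper's proof of \thmref{THMME'}: integrate the recursive monotonicity estimate \eqref{rme} to obtain the two inequalities \eqref{propag-est2} and \eqref{propag-est3} (your pair of integrated inequalities), iterate the second to control the time-integral remainders, substitute back into the first, and consolidate the accumulated boundary cutoffs via properties (X1) and (X2). The only cosmetic difference is the termination criterion for the bootstrap: you run a fixed number of rounds and then bound the surviving integrals crudely by $Ct\,s^{-(n+1)}$ (tacitly using $s\ge 1$, as the paper itself does when passing from \eqref{e3442} to \eqref{336}), whereas the paper truncates the $k$-th substitution at order $n-k+1$ so the tails match $t\,s^{-(n+1)}$ without that step.
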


Using Estimate \eqref{336}, together with relation \eqref{HeisDrel} and the $L^1$ bound on $G(t)$ from assumption \eqref{commVphi}, we arrive at \thmref{THMME}.

\begin{proof}[Proof of \thmref{THMME'}]

Within this proof, we fix $s>0$ and	all constants $C>0$ depend only on $n$, $\chi$,  $\vec\k$, and $\delta=c-\kappa$. For simplicity, below we consider the case for $t\ge0$. For negative times the argument is similar.

1. For ease of notation, for any function $f\in L^\infty$, we write
\begin{align}\label{2.2}
f[t]:= \al_t(\A_s(t,f)).\end{align}
Note in particular that $f[0]\equiv \A_s(0,f)$.

To begin with, we claim the following holds: 
There exist $\tilde \xi_k\in \cX$, $2\le k\le n$ (dropped for $n=1$), depending only on $n,\vec\k,\chi$, and $\delta$,
such that for all  $t\ge0\,,s>0$,
\begin{align}
&\int_0^t \chi'[t]dr  \le C  \del{s\chi[0]
	+ \sum_{k=2}^n s^{-k+2}\tilde \xi_k[0]+   ts^{-n}}, 
\label{propag-est31} 
\end{align}
where the sum is dropped if $n=1$.



To prove \eqref{propag-est31}, we bootstrap the recursive monotonicity estimate \eqref{rme}. For each fixed $s$, integrating formula \eqref{dt-Heis} with $A(t)\equiv \A_s(t,\chi)$ in $t$ gives
\begin{align} \label{eq-basic}  
\chi[t]-\int_0^t \di_r\chi[r]\,dr= \chi[0].
\end{align}
We apply  inequality \eqref{rme} to the second term on the l.h.s. of \eqref{eq-basic} to obtain, after transposing the $O(s^{-1})$-term,
\begin{align} \label{propag-est2} 
&\chi[t]+ s^{-1}\delta\int_0^t\chi'[r]\, dr 
\le  \chi[0]+ \sum_{k=2}^ns^{-k}\int_0^t  \xi_k'[r]\,dr + C{t s^{-(n+1)}},
\end{align}
where $\delta=c-\kappa$, $\xi_k=\xi_k(n,\vec\k,\chi)\in\cX$,
and the second term in the r.h.s. is dropped for $n=1$.

Since $s\,,\delta>0$, estimate \eqref{propag-est2} implies, after dropping $\chi[t]$ on the l.h.s., which is non-negative-definite due to the positive-preserving property of the evolution $\al_t$, and multiplying both sides by $s\delta^{-1}>0$, that
\begin{align}
\int_0^t\chi'[r]\, dr 
\le \frac{1}{\delta} 
\del{s \chi[0]+   \sum_{k=2}^ns^{-k+1}\int_0^t \xi'_k[r]\,dr+ Ct s^{-n}}, \label{propag-est3} 
\end{align}
where the second term in the r.h.s. is dropped for $n=1$. 

If $n=1$, then \eqref{propag-est3} gives \eqref{propag-est31}. If $n\ge2$, we proceed  to apply \eqref{propag-est3} to the term $\int_0^t \xi'_2[r]\,dr$ up to $(n-1)$-th order to get
\begin{equation}\label{223}
\int_0^t \xi_2'[r]\,dr\le \frac{1}{\delta} 
\del{s \xi_2[0] +   \sum_{k=2}^{n-1}s^{-k+1}\int_0^t  \eta_k'[r]\,dr+ Ct s^{-(n-1)}},
\end{equation}
where the sum is dropped for $n=2$ and, for $n\ge3$,	$$\eta_k=\eta_k(n,\vec\k, \xi_2)\in\cX,\quad k=2,\ldots, n-1.$$ 
Plugging \eqref{223} back to \eqref{propag-est3}, we find
\begin{align}
\int_0^t\chi'[r]\, dr 	\le& \frac{1}{\delta} \del{s
\chi[0]+  \frac{1}{\delta}\xi_2[0] + \sum_{k=3}^n s^{-k+1} \int_0^t \rho_k'[r]\,dr + \del{1+\frac1\delta} C t s^{-n}}, \label{propag-est33} 
\end{align}
where the third term in the r.h.s.~is dropped for $n=2$ and the functions $\rho_k\in\cX$, $\rho_k'\ge \xi_k'+\tfrac1\delta\eta_k'$  for $k=3,\ldots, n$ (see \ref{X2}).
Bootstrapping this procedure, we arrive at \eqref{propag-est31} for $n\ge2$.

2. Now we use \eqref{propag-est31} to derive the desire estimate \eqref{MonoEst}.

Dropping the second term in the l.h.s. of \eqref{propag-est2}, which is non-negative since  $\delta>0$ and $ \chi'[r]\ge0$ for all $r$, we obtain
\begin{align} \label{128}
 \chi[t]\le  \chi[0]+ \sum_{k=2}^ns^{-k}\int_0^t  \xi_k'[r]\,dr  + C{ t s^{-(n+1)}},
\end{align}
where the second term is dropped for $n=1$ (in which case we are done). If $n\ge2$, then for each $k=2,\ldots,n$, we apply estimate \eqref{propag-est31} to the $k$-th summand in the second term in the r.h.s. of \eqref{128}, with remainder expanded to $(n-k+1)$-th order. This way we obtain
\begin{align}\label{e3442}
 \chi[t]\le  \chi[0]+ C\sbr{\sum_{k=2}^n\sum_{l=2}^{n-k}s^{-(k-1)}\del{ \tilde \xi_k[0]+s^{-(l+k-2)} \tilde\xi_{k,l}[0]}}+C  ts^{-(n+1)}.
\end{align}
where the $k$-sum is dropped for $n=1$,  the $l$-sum is dropped if $n-k\le1$, and $C$, $\tilde \xi_{k,l}$ are chosen according to \eqref{propag-est31}.

Using property \ref{X2}, we can choose $\xi\in \cX$ such that for $C,\,\tilde \xi_k,\,\tilde \xi_{k,l}$ as in \eqref{e3442},
\begin{align}\label{xiChoice'}
\xi \ge C\DETAILS{M_\delta} \sbr{\sum_{k=2}^n\sum_{l=2}^{n-k}\del{\tilde \xi_k[0]+ \tilde\xi_{k,l} [0]}}.
\end{align}
With this choice of $\xi$, we conclude the desired estimate, \eqref{336}, from \eqref{e3442}. This completes the proof of \thmref{THMME'}. 
\end{proof}

\subsection{Proof of Theorem \ref{THMAP}} 
\label{sec:pfMVE}


Recall that $\phi$ is a densely defined self-adjoint operator on $\hf$ and  $P_{{ a}}$ denotes the spectral cutoff operator defined in \eqref{Pdef}. Our goal now is to choose a  function $s=s(t)$ s.th.~the geometric inequalities \eqref{chi-0s-est'}--\eqref{chi-ts-est'} hold, whereby eliminating the adiabatic parameter $s$ and the ASTLOs from \eqref{MonoEst} so as to conclude the desired estimate \eqref{eLCgen}.

Our main result is the following proposition:
\begin{proposition}\label{propGeo}
Let $\delta,\,c'>0$. For functions $f(t)>c'\abs{t} $ and  $\eta\in C^1\cap L^\infty(\Rb,\Rb_{\ge0})$ with
\begin{equation}\label{41}
	\eta\not\equiv0,\quad \supp \eta\subset(0,\infty),\quad \supp\eta'\subset (0,\delta),
\end{equation}
let
\begin{align}\label{Adef}
	s:=\delta^{-1} (f(t)-c'\abs{t}),\quad 	\A(t,\eta):=\eta(s^{-1}(\phi-c'\abs{t})) .
\end{align}
Then the following estimates hold:
\begin{align}\label{chi-0s-est} 
	&\norm{\eta}_{L^\infty}^{-1} \A(0,\eta)  \le  P_{{0}},\\
	\label{chi-ts-est}&P_{{f(t)}} \le \norm{\eta}_{L^\infty}^{-1} \A(t,\eta)\quad(t\ne0).
\end{align}

\end{proposition}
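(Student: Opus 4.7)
The plan is to prove both \eqref{chi-0s-est} and \eqref{chi-ts-est} by functional calculus applied to the self-adjoint operator $\phi$. Since every operator appearing in either inequality has the form $F(\phi)$ for some bounded Borel function $F$---namely, the spectral projection $P_{a} = \mathbf{1}_{(a,\infty)}(\phi)$ on one side, and $\A(t,\eta) = \eta_{s,t}(\phi)$ with $\eta_{s,t}(\lambda) := \eta(s^{-1}(\lambda - c'\abs{t}))$ on the other---it suffices to verify the corresponding scalar inequalities pointwise on $\Rb$ (in particular on $\sigma(\phi)$) and then promote them to the operator level via functional calculus.

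First I handle \eqref{chi-0s-est} at $t = 0$: by hypothesis $f(0) > 0$, so $s = \delta^{-1} f(0) > 0$ and $\A(0,\eta) = \eta(\phi/s)$. Since $\supp \eta \subset (0,\infty)$ and $0 \le \eta \le \norm{\eta}_{L^\infty}$, the scalar inequality $\eta(\lambda/s) \le \norm{\eta}_{L^\infty} \mathbf{1}_{(0,\infty)}(\lambda)$ holds for every $\lambda \in \Rb$ (positivity of $s$ lets me drop it inside the indicator), and functional calculus of $\phi$ then yields $\A(0,\eta) \le \norm{\eta}_{L^\infty} P_0$.

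For \eqref{chi-ts-est} at $t \ne 0$: from $f(t) > c'\abs{t}$ we have $s > 0$. The crucial observation is that $\supp \eta' \subset (0,\delta)$ forces $\eta$ to be constant on $[\delta,\infty)$, and this constant coincides with $\norm{\eta}_{L^\infty}$ (a feature built into the application class $\cX$ by way of $\chi'\ge 0$ in \eqref{F}). For any $\lambda > f(t)$ the argument satisfies $(\lambda - c'\abs{t})/s > (f(t) - c'\abs{t})/s = \delta$ by the very choice of $s$, so $\eta((\lambda - c'\abs{t})/s) = \norm{\eta}_{L^\infty}$ there; combined with $\eta \ge 0$ elsewhere, the scalar inequality $\norm{\eta}_{L^\infty} \mathbf{1}_{(f(t),\infty)}(\lambda) \le \eta((\lambda - c'\abs{t})/s)$ holds for every $\lambda$, and functional calculus delivers \eqref{chi-ts-est}.

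The proof is a direct spectral-calculus translation of elementary scalar inequalities, and no substantive obstacle is anticipated. The only technical nuance is the identification of the asymptotic plateau of $\eta$ on $[\delta,\infty)$ with $\norm{\eta}_{L^\infty}$, which is immediate for the application class $\cX$ and is what makes \eqref{41} the natural hypothesis. The remaining work is just book-keeping: verifying that the calibration $s = \delta^{-1}(f(t) - c'\abs{t})$ correctly aligns the spectral threshold $\phi = f(t)$ with the boundary $x = \delta$ of the plateau region of $\eta$.
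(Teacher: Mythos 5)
Your proof takes essentially the same route as the paper's: compare $\eta(s^{-1}(\lambda - c'\abs{t}))$ to an indicator function pointwise in $\lambda$ and promote the scalar inequality to $\phi$ via functional calculus, with the calibration $s = \delta^{-1}(f(t)-c'\abs{t})$ chosen so that the plateau boundary $\mu = \delta$ of $\eta$ lines up with the spectral threshold $\phi = f(t)$.

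One point worth emphasizing, which you notice and the paper's own proof glosses over: condition \eqref{41} alone does \emph{not} force the constant value of $\eta$ on $[\delta,\infty)$ to equal $\norm{\eta}_{L^\infty}$. A $C^1$ function vanishing on $(-\infty,0]$, with $\supp\eta'\subset(0,\delta)$, could overshoot on $(0,\delta)$ and settle to a plateau strictly below its supremum, in which case \eqref{chi-ts-est} fails. What rescues the argument is the monotonicity $\eta'\ge 0$, which is part of the definition of $\cX$ in \eqref{F} but is not listed among the hypotheses \eqref{41} of the proposition. Since the proposition is only ever invoked with $\eta\in\cX$ (in the proofs of \thmref{THMAP} and \thmref{thmLocGen}), this is harmless, but your instinct to flag it is correct; strictly speaking either $\eta'\ge 0$ or the weaker requirement $\eta(\delta)=\norm{\eta}_{L^\infty}$ should be added to \eqref{41}.
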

\begin{proof}
First, by \eqref{41}, we have    $\supp \eta\big(\frac{\cdot}{s}\big) \subset (0,\infty)$ for $s>0$. This implies 
\begin{align}
	\norm{\eta}_{L^\infty}^{-1}\A(0,\eta)\equiv \norm{\eta}_{L^\infty}^{-1}\eta\del{\phi/s}\le\theta(\phi)\equiv P_{{0}},\label{447'}
\end{align}
where 	$\theta:\Rb\to\Rb$ is the characteristic function of the half-line  $(0,\infty)$ (see \figref{fig:f0}). Thus  
\eqref{chi-0s-est} follows.

\begin{figure}[H]
	\centering
	\begin{tikzpicture}[scale=3]
		\draw [->] (-.5,0)--(2,0);
		\node [right] at (2,0) {$\phi$};
		\node [below] at (.3,0) {$0$};
		\draw [fill] (.3,0) circle [radius=0.02];
		
		
		\node [below] at (1.5,0) {$s $};
		\draw [fill] (1.5,0) circle [radius=0.02];

		\draw [very thick] (-.5,0)--(.3,0);
		\draw [very thick] (.3,1)--(2,1);				
		\filldraw [fill=white] (.3,1) circle [radius=0.02];
		
		\draw [dashed, very thick] (-.5,0)--(.75,0) [out=20, in=-160] to (1.5,1)--(2,1);

		\draw [->] (1.55,.5)--(1.3,.5);
		\node [right] at (1.55,.5) {$\norm{\eta}_{L^\infty}^{-1}\eta(\tfrac\phi s)$};
		
		\draw [->] (0,.5)--(.85,.95);
		\draw [->] (0,.5)--(-.05,.05);
		\node [left] at (0,.5) {$\theta(\phi)$};
		
	\end{tikzpicture}
	\caption{Schematic diagram illustrating \eqref{447'}}
	\label{fig:f0}
\end{figure}

Next, again by \eqref{41}, we have $\norm{\eta}_{L^\infty}^{-1}\eta (\mu)\equiv 1$ for $\mu> \delta$ and so, by definition \eqref{Adef}, \begin{align}\label{450'}
	\norm{\eta}_{L^\infty}^{-1}\A(t,\eta)\equiv \norm{\eta}_{L^\infty}^{-1}\eta \del{\delta\frac{\phi-c'\abs{t}}{f(t)-c'\abs{t}}} \equiv \one,
\end{align} on the subspace $\Ran P_{{f(t)}}$.
Since   $P_{{f(t)}}\equiv \theta(\phi-f(t))$, estimate \eqref{450'} implies
\begin{align}\label{448'}
	\norm{\eta}_{L^\infty}^{-1}\A(t,\eta)\ge \theta(\phi-f(t)),
\end{align}see \figref{fig:f}. Thus
\eqref{chi-ts-est} follows.

\begin{figure}[H]
	\centering
	\begin{tikzpicture}[scale=3]
		\draw [->] (-.5,0)--(2,0);
		\node [right] at (2,0) {$\phi$};
		
		\node [below] at (0,0) {$c't$};
		\draw [fill] (0,0) circle [radius=0.02];
		
		\node [below] at (1.5,0) {$f(t)$};
		\draw [fill] (1.5,0) circle [radius=0.02];

		\draw [very thick] (-.5,0)--(1.5,0);
		\draw [very thick] (1.5,1)--(2,1);
		\draw [dashed, very thick] (-.5,0)--(.1,0) [out=20, in=-160] to (.65,1)--(2,1);
		\filldraw [fill=white] (1.5,1) circle [radius=0.02];

		\draw [->] (-.1,.5)--(.3,.5);
		\node [left] at (-.1,.5) {$\norm{\eta}_{L^\infty}^{-1}\eta(\tfrac{\phi-c'\abs{t}}{s})$};
		
		\draw [->] (2,.5)--(1.75,.95);
		\draw [->] (2,.5)--(1.25,.05);
		\node [right] at (2,.5) {$\theta(\phi-f(t))$};
		
	\end{tikzpicture}
	\caption{Schematic diagram illustrating \eqref{448'}.}
	\label{fig:f}
\end{figure}
This completes the proof of Proposition \ref{propGeo}.
\end{proof}

We now use \propref{propGeo} and Theorem \ref{THMME} to prove \thmref{THMAP}.

First, for $c>\kappa$ as in the statement of \thmref{THMAP}, we set
\begin{align}\label{deltaChoice}
\delta:=\frac13(c-\kappa)>0,\quad c':=\kappa+\delta. 
\end{align}
Fix any $\chi\in\cX$. 
We apply \thmref{THMME} with $c'>\kappa$ to get  a constant
$C>0$ and a function $\xi\in\cX$ such that 
\begin{align}\label{MonoEst'}
\al_t\del{\A_s(t,\chi)}  \le 	 {\A_s(0,\chi)}+Cs^{-1} {\A_s(0,\xi)}+Cs^{-1}\del{s^{-n} \abs{t}+C_V}.
\end{align}
Next, we apply \propref{propGeo} with 
\begin{align}
f(t):=c\abs{t}>c'\abs{t},\quad s:=\delta^{-1}(c-c')\abs{t}>\abs{t}, 
\end{align}
where the inequalities are ensured by the choice \eqref{deltaChoice}.
The function $\chi$ clearly satisfies condition \eqref{41}. If the function $\xi\not\equiv 0$ in \eqref{MonoEst'}, then   $\xi$ also satisfy \eqref{41}. (If $\xi\equiv 0$ then we drop the second term in the r.h.s. of \eqref{MonoEst'}). Hence,
applying \eqref{chi-0s-est}--\eqref{chi-ts-est} with $\eta=\chi,\xi$ and $\A\equiv \A_s$ as in \eqref{Adef}, we conclude the desired estimate, \eqref{eLCgen}, from estimate \eqref{MonoEst'}.

This completes the proof of \thmref{THMAP}.\qed

\section{Applications to nonlocal dispersive equations
}\label{chapMVENonLoc}

In this section, we apply the general localization theory laid out in Section \ref{secRes} to study a large class of nonlocal dispersive evolution models.

We 	consider the following nonlocal non-autonomous Schr\"odinger equation:
\begin{equation}\label{1.1}
	i\di_t\psi= H(t)\psi.
\end{equation}
Here  $\psi=\psi(\cdot,t),\,t\in\Rb$ is a differentiable path of vectors in the Hilbert space $\hf:=L^2(\Rb^d,\Cb),\,d\ge1$. The Hamiltonian $H(t)=H_0+V(t)$ consists of a nonlocal part
\begin{equation}\label{1.2}
	H_0[\psi](x)=p.v.\int_{y\in\Rb^d} (\psi(x)-\psi(y))K(x,y),
\end{equation}
for some symmetric (and possibly singular) integral kernel $K$ with $K(y,x)=\overline{K(x,y)}$, together with a time-dependent potential $V(t).$ 

As a standing assumption, we assume that $H_0$ is self-adjoint on a dense domain $\cD\equiv \cD(H_0)\subset \hf$ and $V(t)$ is uniformly bounded for all $t$. Consequently, $H(t)$ is self-adjoint on $\cD$ and so, by standard perturbation theory, admits bounded propagator $U(t,s)$ with $t,s\in\Rb$ (see e.g.~\cite[Theorem 25.32]{GS}). 

Our main technical assumption is the following: For some integer $n\ge1$ and function $\phi\in \dot{W}^{1,\infty}$ (i.e., weakly differentiable with $\grad \phi\in L^\infty),$  the operators
\begin{equation}\label{BpDef}
	B_p[f](x):=\int_{y\in \Rb^d} 	{K(x,y)}(\phi(x)-\phi(y)) ^pf(x)
\end{equation}
satisfy, for $p=1,\ldots, n+1$ and some $\kappa_p>0$, \begin{align}\label{k-cond0}
	\norm{B_p}_{L^2\to L^2}\le \kappa_p  . 
\end{align}


We show in Appendix \ref{sec:A} that condition \eqref{k-cond0} amounts to the main technical condition \eqref{commHphi} in the general theory (see \secref{secMainRes}) and that a sufficient condition for \eqref{k-cond0} is
\begin{equation}\label{k-cond1}
	\max_{1\le p\le n+1}\sup_{x\in\Rb^d}\int_{y\in \Rb^d} 	\abs{K(x,y)}\abs{x-y}^p <\infty.
\end{equation}
Typical examples of the form \eqref{1.2} satisfying \eqref{k-cond1} include the nonlocal diffusion operators 
\begin{equation}\label{1.3}
	H_0= 1- J*,
\end{equation}
where $J$ is a   radial function with profile satisfying \begin{equation}\label{j-cond}
	\sup_{1\le p\le n+1} \int_0^\infty r^{p+d-1}\abs{J(r)}\,dr<\infty,
\end{equation}
e.g.,  $J(x)=(1+\abs{x}^2)^{-a/2}$ with $a>d+n+1$. By interpolation, mild singularity is allowed at $0$, e.g., $J(x)=O(|x|^{-b})$ with $b<d+1$. 

Condition \eqref{k-cond0} are also verified by certain fractional differential operators.
{In his seminal work \cite{MR0177312}, Calder\'on proved that \eqref{k-cond0} holds for $p=1$ and  $H_0=(-\Lap)^{1/2}$, or equivalently, 
	\begin{align}
		K(x,y)=\frac{1}{\abs{x-y}^{d+1}}.
	\end{align}
	The boundedness of commutators of more general singular integral operators and fractional elliptic operators are subsequently established in \cite{MR0412721, MR0358205,MR0763911, MR3286493,MR3547014,MR4443495,MR3555319}, among many others, under various conditions on $H_0$, $K$ and for various classes of functions $\phi$ (typically belonging to $\dot W^{1,\infty}$ or BMO). As the scheme below indicates, boundedness of singular integral operators of the form \eqref{BpDef} would lead to similar propagation estimates in the corresponding dynamical models.
}


Evolution equations involving nonlocal operators of the form \eqref{1.2} have received  much research attention in recent years. 
For recent results concerning evolution equations involving \eqref{1.3} subject to similar conditions as \eqref{j-cond}, see e.g.~\cite{MR2257732,MR2542582,MR3285829,MR3289358,MR3440113,MR4182983} and, for applications to natural sciences, \cite{MR3469920,MR4409816}, as well as the references therein. For regularity theory of nonlocal evolution equations, see \cite{MR3626038,MR3771838,MR3959442}. For an excellent recent review on  nonlocal diffusion operators with integrable kernels, see \cite{MR4187861}. 

Note however that all of the cited works above are concerned with, instead of Hamiltonian evolution equation as in \eqref{1.1}, gradient flows of the form $\di_t\psi=-H\psi$ with $H$ of the form \eqref{1.2}.  This distinction should be made clear since the Hamiltonian structure of \eqref{1.1} is used crucially in proving the recursive monotonicity estimate \eqref{RMEproto} for $\A_s(t,\chi)$ (wherefore in all us results in \secref{secRes} as well). See \remref{rem:Ham} below for a discussion.

Eq.~\eqref{1.1} arises, among others, from the study of nonlinear nonlocal Schr\"odinger (NLS) equations  of the form
\begin{equation}\label{NLS}
	i\di_t \psi = H_0\psi+W\psi+f(\abs{\psi}^2)\psi,\quad f\in C(\Rb_{\ge0},\Rb),
\end{equation}
where $W$ is  a bounded  external potential (possibly time-dependent).
Eq.~\eqref{NLS} has a Hamiltonian structure inherited from the nonlocal generalization of the Ginzburg-Landau free-energy functional in the presence of external potential:
$$E(\psi)=\frac14\iint K(x,y)\abs{\psi(x)-\psi(y)}^2 + \int W(x)\abs{\psi(x)}^2+F(\abs{\psi(x)}^2),\quad F'=f.$$
Indeed, if  $\psi^{(0)}_t\in L^\infty\cap L^2$ solves \eqref{NLS}, then $\psi^{(0)}_t$ satisfies \eqref{1.1}  with $V(t):=W+f(\lvert{\psi^{(0)}_t}\rvert)$ bounded for all $t$. 
This convolution-type model for phase transitions was proposed in \cite{MR1463804} and the associated $L^2$-gradient flow (the nonlocal Allen-Cahn equation) has been studied in \cite{MR1463804,MR1712445,MR1933014,MR2257732,MR2542582}. See \cite[Sect.~1]{MR1712445} for a discussion on the connection between $E(\psi)$ above and the classical Ginzburg-Landau energy functional.

\subsection{Results}\label{secApplRes}

%
%
Under the standing assumption, the evolution of a state $\psi_s\in\cD$ from time $s$ according to \eqref{1.1} is given by
\begin{equation}\label{U}
	\psi_t= U(t,s) \psi_s,
\end{equation}
where $U(t,s),\,s,t\in\Rb$ is the propagator for $H(t)=H_0+V(t)$ in \eqref{1.1}. 
The evolution of an observable $A$, dual to the evolution of states $\psi_s\mapsto \psi_t=U(t,s)\psi_s$  w.r.t.~the coupling $(A,\psi_s)\mapsto\inn{\psi_s}{A\psi_s}$, is given by
\begin{equation}\label{1.5}
	\al_{t,s}(A):=U(t,s)^*AU(t,s),
\end{equation}
where $U(t,s)^*$ is the backward propagator.



For  $\psi_0\in\cD$ and $A\in\cB(\hf)$, we denote by $\psi_t=U(t,0)\psi_0$ and $\al_t(A)=\al_{t,0}(A)$ the evolution of states and observables, respectively. Let  $\kappa=\kappa_1$ and $\vec\kappa=(\kappa_1,\ldots,\kappa_{n+1})$ be as in \eqref{k-cond0}. Our main result in this section is the following:
\begin{theorem}\label{thmLocGen}  
	Suppose \eqref{k-cond0} holds for $n\ge1$ and $\phi\in \dot{W}^{1,\infty}$.   Then, for every $c > \kappa$,
	there exists $C=C(n,c,\norm{\grad\phi}_{L^\infty},\vec\kappa)>0$ such that for any function $f(t)> c\abs{t}$ and $t\ne 0$, 
	\begin{align}
		\label{MVE0}\norm{\1_{\Set{x\mid\phi(x)>f(t)}}\psi_t}^2
		\le& 
		(1+C(f(t)-c\abs{t})^{-1})\norm{\1_{\Set{x\mid\phi(x)>0}}\psi_0}^2 \notag\\&+C\abs{t}(f(t)-c\abs{t})^{-(n+1)} \norm{\psi_0}^2.
	\end{align}
\end{theorem}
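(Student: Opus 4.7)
The plan is to specialize the abstract localization machinery of \secref{secRes} with reference operator taken to be the multiplication operator by $\phi$, and to read off \eqref{MVE0} from the monotonicity estimate \eqref{MonoEst} combined with the geometric bounds of \propref{propGeo}, after choosing the adiabatic parameter $s$ in terms of the prescribed function $f(t)$.

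First I would verify the two abstract commutator hypotheses. Since $V(t)$ is a bounded multiplication operator, it commutes pointwise with $\phi$, so $[\phi,V(t)]=0$ and \eqref{commVphi} holds with $C_V=0$; in particular the identity \eqref{HeisDrel} holds with $G\equiv0$. For \eqref{commHphi} I would invoke \appref{sec:A}, which shows that the iterated commutators $\ad{p}{\phi}{H_0}$ for $p=1,\ldots,n+1$ are, up to sign, the integral operators with kernels $K(x,y)(\phi(x)-\phi(y))^p$, and thus are bounded on $L^2$ by exactly the constants $\kappa_p$ appearing in \eqref{k-cond0}; the hypothesis $\phi\in\dot W^{1,\infty}$ is what makes this kernel identification legitimate. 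Note in particular that the $\kappa_1$ in \eqref{kappaDef} coincides with the $\kappa$ in the statement.

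Next I would fix $c>\kappa$, set $\delta:=c-\kappa>0$, and apply \thmref{THMME} at slope $c$ to obtain $\chi,\xi\in\cX$ and a constant $C=C(n,c,\vec\kappa,\norm{\grad\phi}_{L^\infty})$ for which
\[
\al_t\bigl(\A_s(t,\chi)\bigr)\le \A_s(0,\chi)+Cs^{-1}\A_s(0,\xi)+Cs^{-(n+1)}\abs{t}
\]
holds for all $s>0$ and $t\in\Rb$, with the $C_V$-term absent. For the given $f(t)>c\abs{t}$, I would then choose the scale $s:=\delta^{-1}(f(t)-c\abs{t})>0$ and apply \propref{propGeo} with $c'=c$ to both $\eta=\chi$ and $\eta=\xi$ (both satisfy \eqref{41} by virtue of lying in $\cX$). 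This yields $\A_s(0,\eta)\le\norm{\eta}_{L^\infty}P_0$ and $P_{f(t)}\le\norm{\chi}_{L^\infty}^{-1}\A_s(t,\chi)$. Combining with positivity of $\al_t$ and substituting the value of $s$ gives the operator inequality
\[
\al_t\bigl(P_{f(t)}\bigr)\le \bigl(1+C(f(t)-c\abs{t})^{-1}\bigr)P_0+C\abs{t}\bigl(f(t)-c\abs{t}\bigr)^{-(n+1)}.
\]

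To conclude, I would pair this inequality with $\psi_0$ on both sides, use the duality \eqref{Aevol} to rewrite $\inn{\psi_0}{\al_t(P_{f(t)})\psi_0}=\inn{\psi_t}{P_{f(t)}\psi_t}$, and observe that because $\phi$ acts by multiplication, $P_a$ is multiplication by $\one_{\{x:\phi(x)>a\}}$; the two quadratic forms therefore become precisely the squared $L^2$-norms on the two sides of \eqref{MVE0}. The only genuinely analytic step in this derivation is the commutator bound $\eqref{k-cond0}\Rightarrow\eqref{commHphi}$ supplied by \appref{sec:A}; the main obstacle, if one were attacking the problem from scratch, would be producing that Calder\'on-commutator-type estimate for the nonlocal integral kernel, whereas the abstract framework of \secref{secRes} already packages all the adiabatic bookkeeping.
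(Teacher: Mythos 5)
Your proposal is correct and follows essentially the same route as the paper's proof: verify \eqref{commVphi} with $C_V=0$ and \eqref{commHphi} via the kernel formula of \appref{sec:A}, apply \thmref{THMME}, select the adiabatic scale $s$ proportional to $f(t)-c\abs{t}$, pass to the operator inequality via \propref{propGeo}, and conclude by pairing with $\psi_0$ and using the duality \eqref{Aevol}. The only nitpick is your remark that $\phi\in\dot W^{1,\infty}$ is what makes the kernel identification ``legitimate'' --- the algebraic identification of $\ad{p}{\phi}{H_0}$ with the operator with kernel $(\phi(y)-\phi(x))^pK(x,y)$ needs no regularity; the Lipschitz hypothesis matters for transferring the moment bound \eqref{k-cond1} to \eqref{k-cond0}, and the paper also notes the trivial edge case $\xi\equiv0$ which you silently skip, but neither point affects correctness.
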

\begin{proof} 
	We derive estimate \eqref{MVE} as a consequence of Thms.~\ref{THMRME}--\ref{THMME} and \propref{propGeo}. 
	Fix 
	$t\ne0$ and 
	$\chi\in\cX$ (see \eqref{F}) with
	$\chi(\mu)\equiv 1$ for $\mu\ge1$.  	Below, all estimates are independent of these parameters.

	First, 
	we verify the assumptions of \thmref{THMRME}.
	Since $H=H_0+V$ in \eqref{1.1} with $[V,\phi]=0$, the evolution condition \eqref{HeisDrel} is satisfied with $H_0$ given by \eqref{1.2}. By \lemref{lemA.1}, the Hamiltonian $H_0$ from \eqref{1.2} and $\phi$ verify the commutator condition \eqref{commHphi}, with $\kappa_p$ depending on $\Lip(\phi)$. We have shown that the assumptions of \thmref{THMRME} hold. Thus, by Thms.~\ref{THMRME}--\ref{THMME}, estimate \eqref{MonoEst} holds.

	Next, define $s=s(t):=f(t)-c\abs{t}>0$ and denote by $\A(t,\chi)\equiv \A_s(t,\chi)$ the ASTLOs from  \eqref{chi-ts} with this choice of $s$. Then, by estimate \eqref{MonoEst}, there exists a constant
	$C>0$ and a function $\xi \in\cX$  such that
	\begin{align}\label{MonoEst''}
		\wtdel{\A(t,\chi)} \le&	\wndel{\A(0,\chi)}\notag +(f(t)-c\abs{t})^{-1}\wndel{\A_s(0,\xi)}\\&+C\abs{t}(f(t)-c\abs{t})^{-(n+1)}\norm{\psi_0}^2.
	\end{align}
	
	Lastly, we use  Proposition \ref{propGeo}. The function $\chi$ clearly satisfies condition \eqref{41}. If the function $\xi\not\equiv 0$ in \eqref{MonoEst''}, then $\xi$ also satisfy \eqref{41}. (If $\xi\equiv 0$ then we drop the second term in the r.h.s. of \eqref{MonoEst''}). Hence,
	applying \eqref{chi-0s-est}--\eqref{chi-ts-est} with $\eta=\chi,\xi$ in \eqref{MonoEst''} and using that $P_{{a}}\equiv \1_{\Set{x\mid\phi(x)>a}}$ for all $a>0$ (see \eqref{Pdef}),  we conclude the desired estimate, \eqref{MVE}, from estimate \eqref{MonoEst''}. This completes the proof of Theorem \ref{thmLocGen}.
\end{proof}

Theorem \ref{thmLocGen} grants control over the localization of states $\psi_t$ w.r.t.~to a fixed reference geometry, $\phi(x)$, and a height function $f(t)$.  The growth rate of $f(t)$ in turn determines the decay estimate of the probability leakage as in \eqref{MonoEst''}.

Specifically, let $X\subset \Rb^d$ and $d_X(x)=\inf_{y\in X}\abs{x-y}$. Taking  $\phi=d_X$ and using the facts that $\norm{\grad d_X}_{L^\infty}\le1$,  $\1_{\Set{d_X>c\abs{t}}}\equiv \1_{X_{c\abs{t}}^\cp}$, 
we conclude from  \thmref{thmLocGen} that
\begin{corollary}[Localization of scattering states]   
	Suppose \eqref{k-cond0} holds for $n\ge1$ and $\phi=d_X$.   Then, for every $c > \kappa$,
	there exists $C=C(n,c,\vec\kappa)>0$ such that for all subset $X\subset \Rb^d$, functions $f(t)> c\abs{t}$, and $t\ne 0$, 
	\begin{equation}
		\label{MVE}\norm{\one_{X_{f(t)}^\cp}\psi_t}^2
		\le 
		(1+C(f(t)-c\abs{t})^{-1})\norm{\one_{X^\cp}\psi_0}^2 +C\abs{t}(f(t)-c\abs{t})^{-(n+1)} \norm{\psi_0}^2.
	\end{equation}
	
\end{corollary}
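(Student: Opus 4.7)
The plan is to deduce the corollary directly from \thmref{thmLocGen} by specializing the reference operator to $\phi = d_X$ and translating the abstract sublevel/superlevel sets into the concrete tubular sets $X_{a}$, $X_{a}^\cp$. The main content of the proof is to verify that the hypotheses of \thmref{thmLocGen} are met uniformly in the choice of subset $X\subset\Rb^d$, so that the constant in the conclusion can be taken to depend only on $n,c,\vec\kappa$ and not on $X$.

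First I would check regularity of $d_X$. For any nonempty $X\subset\Rb^d$ the distance function $d_X(x)=\inf_{y\in X}|x-y|$ is $1$-Lipschitz by the triangle inequality, hence weakly differentiable on $\Rb^d$ with $\|\grad d_X\|_{L^\infty}\le 1$. In particular $d_X\in\dot W^{1,\infty}$, and the Lipschitz constant is bounded by $1$ \emph{uniformly in $X$}. Since the constant $C$ produced in \thmref{thmLocGen} depends on $\phi$ only through $\|\grad\phi\|_{L^\infty}$, applying that theorem with $\phi=d_X$ yields a constant $C=C(n,c,\vec\kappa)$ that can be chosen independently of $X$.

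Next I would translate the spectral cutoffs of $d_X$ (viewed as a multiplication operator) into characteristic functions of tubular sets. By definition of $X_a$ and $X_a^\cp$, for any $a\ge 0$,
\begin{equation*}
\{x\in\Rb^d:d_X(x)>a\}=X_a^\cp,
\end{equation*}
so $\1_{\{\phi>f(t)\}}=\1_{X_{f(t)}^\cp}$. Moreover $d_X(x)>0$ implies $x\notin X$, giving the pointwise bound $\1_{\{d_X>0\}}\le\1_{X^\cp}$, and hence
\begin{equation*}
\|\1_{\{\phi>0\}}\psi_0\|^2\le\|\1_{X^\cp}\psi_0\|^2.
\end{equation*}

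Finally, I would invoke \thmref{thmLocGen} with $\phi=d_X$ and the given function $f(t)>c|t|$ to obtain \eqref{MVE0}, and then substitute the two identifications above on the left and right sides respectively. This produces \eqref{MVE} with a constant depending only on $n,c,\vec\kappa$, completing the proof. There is no real obstacle here — the corollary is essentially a geometric re-labelling of \thmref{thmLocGen}; the only point worth being explicit about is the uniform-in-$X$ Lipschitz bound on $d_X$, which is what makes the constant $C$ in \eqref{MVE} genuinely independent of $X$.
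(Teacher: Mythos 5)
Your proof is correct and takes essentially the same route the paper does: the paper also derives the corollary by specializing \thmref{thmLocGen} to $\phi=d_X$, invoking $\norm{\grad d_X}_{L^\infty}\le1$ and the identification $\1_{\{d_X>a\}}=\1_{X_a^\cp}$. Your extra observation that $\1_{\{d_X>0\}}\le\1_{X^\cp}$ is a clean way to make the right-hand-side substitution precise.
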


To see that \eqref{MVE} controls the localization of evolving states according to \eqref{1.1},  fix $\eps>0$ and define $f(t)=(c+\eps)\abs{t}$. Assuming the initial condition $\psi$ at $t=0$ is localized in $X$ in the sense that $\norm{\one_{X^\cp}\psi_0}\le \eps$, we conclude from \eqref{MVE} that $\norm{\one_{X_{ct}^\cp}\psi_t}_{L^2}^2\ls \eps+ \abs{t}^{-1}+ \eps^{-(n+1)} \abs{t}^{-n}$ for all $t\ne0$.

As a  consequence of the localization estimate \eqref{MVE}, we have the following a priori estimate on the propagation speed of traveling wave solutions to the nonlinear nonlocal Schr\"odinger equation \eqref{NLS}:
\begin{corollary}\label{corSoliton}Suppose \eqref{k-cond1} holds for $n\ge1$.
	Suppose $\psi_t \in L^2\cap L^\infty,\,t\ge0$ solves the NLS equation \eqref{NLS} and $ \psi_t  = U(\cdot-\beta t)$ for some fixed velocity $\beta\in\Rb^d$ and profile $U$ with the following property:
	There exists a bounded subset $X\subset \Rb^d$ such that $\norm{ \one_{X^c} U}^2<\norm{U}^2/2$.  Then $\abs{\beta}\le \kappa$. 
\end{corollary}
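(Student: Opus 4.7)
The plan is to argue by contradiction: assume $|\beta| > \kappa$ and use the localization estimate \eqref{MVE} to derive an inconsistency with the mass-concentration hypothesis $\norm{\one_{X^c} U}^2 < \norm{U}^2/2$. The key preliminary observation is that the traveling-wave ansatz $\psi_t = U(\cdot - \beta t)$, being an $L^\infty \cap L^2$ solution of the NLS equation \eqref{NLS}, automatically satisfies a linear non-autonomous equation of the form \eqref{1.1} with time-dependent potential $V(t) = W + f(|\psi_t|^2) \in L^\infty$; this is the reduction already noted after \eqref{NLS}. Consequently the hypotheses of \thmref{thmLocGen}, and hence of its corollary containing \eqref{MVE}, apply to $\psi_t$ with $\phi = d_X$ and $\kappa$ as in \eqref{k-cond0}.

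To exploit \eqref{MVE}, I would pick parameters $\kappa < c < c' < |\beta|$, which is possible precisely because we have assumed $|\beta| > \kappa$. Applying \eqref{MVE} with $f(t) = c'|t|$ (so that $f(t) - c|t| = (c'-c)|t|$) yields
\begin{equation*}
\norm{\one_{X_{c'|t|}^c}\psi_t}^2 \le \bigl(1 + C(c'-c)^{-1}|t|^{-1}\bigr)\norm{\one_{X^c}U}^2 + C(c'-c)^{-(n+1)}|t|^{-n}\norm{U}^2,
\end{equation*}
and since $n \ge 1$ this upper bound converges to $\norm{\one_{X^c} U}^2 < \norm{U}^2/2$ as $|t| \to \infty$.

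It then remains to produce a matching lower bound of at least $\norm{U}^2/2$ for large $|t|$, which is where the boundedness of $X$ enters. For any $z, y \in X$ one has by the triangle inequality $|(z + \beta t) - y| \ge |\beta||t| - \diam(X)$, so for $|t| > \diam(X)/(|\beta| - c')$ we obtain $d_X(z + \beta t) > c'|t|$, that is, $X + \beta t \subset X_{c'|t|}^c$. Translation invariance of Lebesgue measure then gives
\begin{equation*}
\norm{\one_{X_{c'|t|}^c}\psi_t}^2 \ge \int_{X + \beta t} |U(x - \beta t)|^2\,dx = \norm{\one_X U}^2 > \norm{U}^2/2,
\end{equation*}
the last inequality following from $\norm{\one_X U}^2 = \norm{U}^2 - \norm{\one_{X^c} U}^2 > \norm{U}^2/2$. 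Passing to the limit $|t| \to \infty$ contradicts the upper bound, so $|\beta| \le \kappa$. There is no real obstacle in this argument: all the analytic work is done by \eqref{MVE}, and the remaining ingredient is the elementary geometric fact that a bounded set translated at speed $|\beta| > c'$ eventually exits the tube $X_{c'|t|}$.
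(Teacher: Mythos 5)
Your proposal is correct and follows essentially the same contradiction argument as the paper: apply \eqref{MVE} to the frozen-coefficient linear equation satisfied by $\psi_t$ to obtain an upper bound tending to $\norm{\one_{X^c}U}^2<\norm{U}^2/2$, then observe that the translated bounded support $X+\beta t$ eventually exits the cone $X_{c'|t|}$, forcing $\norm{\one_{X_{c'|t|}^\cp}\psi_t}^2\ge\norm{\one_X U}^2>\norm{U}^2/2$. Your version is slightly more explicit than the paper's in separating the two slopes $c<c'<|\beta|$ and in quantifying the threshold time $|t|>\diam(X)/(|\beta|-c')$, but the substance is identical.
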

\begin{proof}
	
	Since $\psi_t$ solves \eqref{1.1}, by freezing coefficients, $\psi_t$ satisfies \eqref{MVE} and therefore we have   \begin{equation}\label{1.30}
		\norm{\one_{X_{ct}^\cp}U(x-\beta t)}^2\le \norm{U}^2/2 + C t^{-n},
	\end{equation}
	for all $c>\kappa$.  Suppose now $\abs{\beta}>\kappa$. Then, on the one hand, we can choose $c\in(\kappa,\abs{\beta})$ such that \eqref{1.30} holds. On the other hand, since $c<\abs{\beta}$, there is a large $T\gg1$ depending only on $\abs{\beta}-c$ and $\diam (X)$ such that \begin{equation}\label{130}
		\norm{\one_{X_{ct}^\cp}U(\cdot -\beta t)}^2\ge\norm{\one_{X} U}^2>\norm{U}^2/2 
	\end{equation} for all $t\ge T$ (see \figref{fig:U}). This is a contradiction to \eqref{1.30}. 
	\begin{figure}[H]
		\centering
		\begin{tikzpicture}[scale=.8]
			\draw  plot[scale=.52,smooth, tension=.7] coordinates {(-3,0.5) (-2.5,2.5) (-.5,3.5) (1.5,3) (3,3.5) (4,2.5) (4,0.5) (2.5,-2) (0,-1.5) (-2.5,-2) (-3,0.5)};
			
			\draw[dashed]  plot[shift={(5,0)}, scale=.52,smooth, tension=.7] coordinates {(-3,0.5) (-2.5,2.5) (-.5,3.5) (1.5,3) (3,3.5) (4,2.5) (4,0.5) (2.5,-2) (0,-1.5) (-2.5,-2) (-3,0.5)};
			
			\draw  plot[shift={(-0.2,-0.25)}, scale=.76,smooth, tension=.7] coordinates {(-3,0.5) (-2.5,2.5) (-.5,3.5) (1.5,3) (3,3.5) (4,2.5) (4,0.5) (2.5,-2) (0,-1.5) (-2.5,-2) (-3,0.5)};
			
			\node  at (-0.7,1) {$X$};
			
			\draw [fill] (0.5,0.5) circle [radius=0.05];
			\node[below]  at (0.5,0.5) {$U(\cdot)$};
			
			\draw [fill] (5.5,0.5) circle [radius=0.05];
			\node[below]  at (5.5,0.5) {$U(\cdot-\beta T)$};	 		
			\draw [->] (0.5,0.5)--(5.45,0.5);
			
			\node[below]  at (5.5,-1.5) {$X^\cp_{ct}$};	 		
			
			
			\node [below] at (2.32,0) {$\underbrace{}_{ct}$};
		\end{tikzpicture}
		\caption{Schematic diagram illustrating relation \eqref{130}.}
		\label{fig:U}
	\end{figure}
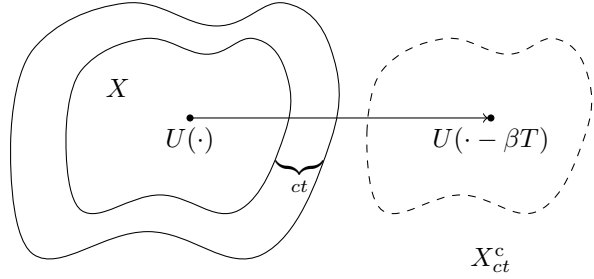
	The proof of \corref{corSoliton} is complete.
\end{proof}

\section{Technical lemmas}\label{chapPre}

\subsection{Remainder estimates}\label{secRemEst}
In this section and the next one, we present some estimates and commutator expansions, first derived in \cite{SigSof}  and then improved in \cite{HunSig3, Skib} etc.  Below, we adapt some of the arguments from \cite{HunSig3} and results from \cite{MVBvNL}.

{Throughout this section we fix an integer $\nu\ge0$.} For integers $p\ge 0$ and smooth functions $f\in C^{\nu+2}(\Rb)$, we define a weighted norm 
\begin{align}\label{pnNorm}
	\cN(f,p):=\sum_{m=0}^{\nu+2}\int_\Rb \br{x}^{m-p-1}\abs{f^{(m)}(x)}\,dx.
\end{align}
Note that
\begin{align}\label{Norder}
	p\le p'\implies \cN(f,p')\le \cN(f,p),
\end{align}
and we have the following property:
\begin{lemma}\label{lemNfin}
	Let $p\ge0$ be an integer. Suppose $f\in C^{\nu+2}(\Rb)$ and {there exist  $C_0,\,\rho>0$ such that for $ m=0,\ldots, \nu+2$, 
		\begin{align}
			\label{fCond}
			\norm{	\br{x}^{m-p+\rho}	f^{(m)}(x)}_{L^\infty}\le C_0 .
	\end{align}}  Then  there exists $C>0$ depending only on $\rho,\,C_0,\,\nu$ such that
	\begin{align}\label{Ncond}
		\cN(f,p)\le C.
	\end{align}
\end{lemma}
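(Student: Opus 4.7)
The plan is to bound each of the $\nu+2$ summands in the definition \eqref{pnNorm} of $\cN(f,p)$ pointwise by an integrable function independent of $m$, and then integrate.

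First I would use the hypothesis \eqref{fCond} to obtain the pointwise estimate
\begin{equation*}
\abs{f^{(m)}(x)} \le C_0 \br{x}^{-(m-p+\rho)} = C_0\, \br{x}^{\,p-m-\rho}, \qquad m=0,\ldots,\nu+2.
\end{equation*}
Inserting this into the $m$-th integrand appearing in \eqref{pnNorm} collapses the weight $\br{x}^{m-p-1}$ exactly against the decay rate, leaving a single $m$-independent tail:
\begin{equation*}
\br{x}^{m-p-1}\abs{f^{(m)}(x)} \le C_0\, \br{x}^{m-p-1+p-m-\rho} = C_0\, \br{x}^{-1-\rho}.
\end{equation*}

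Next I would use the fact that $\rho>0$ to conclude that $\br{x}^{-1-\rho}$ is integrable on $\Rb$, with $\int_\Rb \br{x}^{-1-\rho}\,dx =: I_\rho < \infty$. Summing over $m=0,\ldots,\nu+2$ yields
\begin{equation*}
\cN(f,p) \le (\nu+2)\, C_0\, I_\rho,
\end{equation*}
which is the required bound \eqref{Ncond} with $C = (\nu+2)C_0 I_\rho$ depending only on $\rho$, $C_0$, and $\nu$.

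This is essentially bookkeeping: the whole content of the lemma is that the exponents in the weighted norm \eqref{pnNorm} are designed so that an $L^\infty$ assumption with a strictly stronger decay (by any fixed $\rho>0$) converts directly into an $L^1$ bound. There is no serious obstacle; the only thing to check carefully is the algebra of exponents in the cancellation between $\br{x}^{m-p-1}$ and the pointwise decay rate $\br{x}^{p-m-\rho}$, and the observation that the condition is uniform in $m$ so the sum over $m$ is finite.
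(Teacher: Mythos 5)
Your proof is correct and is essentially identical to the paper's argument: bound each summand in \eqref{pnNorm} by $C_0\br{x}^{-1-\rho}$ using \eqref{fCond} and integrate, using $\rho>0$ for convergence. The only quibble is a harmless off-by-one in the final constant: the sum over $m=0,\ldots,\nu+2$ has $\nu+3$ terms, so the bound is $(\nu+3)C_0 I_\rho$ rather than $(\nu+2)C_0 I_\rho$, which does not affect the conclusion.
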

\begin{proof}
	We have 
	\begin{align*}
		\cN(f,p) \le&  \sum_{m=0}^{\nu+2}	\norm{	\br{x}^{m-p+\rho}	f^{(m)}(x)} \int_\Rb \br{x}^{-1-\rho}dx
		\\\le& (\nu+3) C_0 \int_\Rb \br{x}^{-1-\rho}dx,
	\end{align*}
	and the integral converges for $\rho>0$. 
\end{proof}

%

Write $z=x+iy\in\Cb$   and $\di_{\bar z}=\di_x+i\di_y$. In what follows, 					as in \cite[eq.(B.5)]{HunSig3}, for $f\in C^{\nu+2}(\Rb)$, we take $\tilde f(z)$ to be an almost analytic extension of $f$ defined by 	
\begin{equation}\label{tfDef}
\widetilde f (z):=\eta\del{\frac{y}{\br{x}}}\sum_{k=0}^{\nu+1}f^{(k)}(x)\frac{(iy)^k}{k!},
\end{equation}							where $\eta\in C_c^\infty(\Rb)$ is a cutoff function   with  	$\eta(\mu)\equiv1$ for $\abs{\mu}\le1$,  $\eta(\mu)\equiv0$ for $\abs{\mu}\ge2$, and $\abs{\eta'(\mu)}\le1$ for all $\mu$.
This $\widetilde f(z)$ induces a measure on $\Cb$ as
\begin{align}\label{measDef}
d\widetilde f(z):=-\frac{1}{2\pi}\di_{\bar z}\widetilde f(z)dx\,dy.
\end{align}
In the remainder of this section, we derive integral estimate for various functions against the measure \eqref{measDef}.

The next result is obtained by adapting the argument  in \cite[Lem.~B.1]{HunSig3}:
\begin{lemma}[Remainder estimate]\label{lemRemEst}
Let $0\le p \le \nu$.
Let $f\in C^{\nu+2}(\Rb)$ satisfy \eqref{Ncond}.
Then {the extension $\widetilde f$ from \eqref{tfDef} satisfies} the following estimate for some 						  $C=C(f,\nu,p)>0:$
\begin{align}\label{fRemEst}
	\int \abs{d\widetilde{f}(z)} \abs{\Im(z)}^{-(p+1)} \le C.
\end{align}
\end{lemma}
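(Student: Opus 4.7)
The plan is to compute $\partial_{\bar z}\widetilde f(z)$ explicitly from the defining formula \eqref{tfDef}, identify the two pieces that arise, and then bound the contribution of each piece to the integral in \eqref{fRemEst} by a suitable term of the weighted norm $\cN(f,p)$, which is finite by hypothesis \eqref{Ncond}.

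First I will apply $\partial_{\bar z}=\partial_x+i\partial_y$ to $\widetilde f$. By the product rule the result splits as
\begin{align*}
\partial_{\bar z}\widetilde f(z)
= \eta\Bigl(\tfrac{y}{\br{x}}\Bigr)\,\partial_{\bar z}\Bigl[\sum_{k=0}^{\nu+1}f^{(k)}(x)\tfrac{(iy)^k}{k!}\Bigr]
+ \Bigl[\partial_{\bar z}\eta\Bigl(\tfrac{y}{\br{x}}\Bigr)\Bigr]\sum_{k=0}^{\nu+1}f^{(k)}(x)\tfrac{(iy)^k}{k!} .
\end{align*}
In the first bracket, $\partial_x$ produces $\sum_{k=0}^{\nu+1}f^{(k+1)}(x)(iy)^k/k!$ and $i\partial_y$ produces $-\sum_{k=1}^{\nu+1}f^{(k)}(x)(iy)^{k-1}/(k-1)!$; after telescoping only the top-order term $f^{(\nu+2)}(x)(iy)^{\nu+1}/(\nu+1)!$ survives. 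In the second bracket, a direct computation gives $\partial_{\bar z}\eta(y/\br{x})=-\eta'(y/\br{x})\, xy/\br{x}^3+i\eta'(y/\br{x})/\br{x}$, which is supported where $\br{x}\le |y|\le 2\br{x}$ and satisfies $|\partial_{\bar z}\eta(y/\br{x})|\lesssim 1/\br{x}$ on that strip (using $|xy|/\br{x}^3\lesssim 1/\br{x}$ when $|y|\le 2\br{x}$).

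Next I will estimate the two contributions separately. For the ``bulk'' term, using $\supp\eta(\cdot/\br{x})\subset\{|y|\le 2\br{x}\}$ and $p\le\nu$,
\begin{align*}
\int |f^{(\nu+2)}(x)|\,|y|^{\nu+1}\,|y|^{-(p+1)}\mathbf{1}_{|y|\le 2\br{x}}\,dx\,dy
\lesssim \int |f^{(\nu+2)}(x)|\,\br{x}^{\nu+1-p}\,dx,
\end{align*}
which is exactly the $m=\nu+2$ term of $\cN(f,p)$. For the ``boundary'' term coming from $\partial_{\bar z}\eta$, each $k$-summand contributes
\begin{align*}
\int \br{x}^{-1}|f^{(k)}(x)|\,|y|^{k-p-1}\mathbf{1}_{\br{x}\le|y|\le 2\br{x}}\,dx\,dy
\lesssim \int |f^{(k)}(x)|\,\br{x}^{k-p-1}\,dx,
\end{align*}
which is the $m=k$ term of $\cN(f,p)$. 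Summing these $\nu+3$ contributions yields
\begin{align*}
\int |d\widetilde f(z)|\,|\Im z|^{-(p+1)}\lesssim \cN(f,p)\le C,
\end{align*}
by \eqref{Ncond}.

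The routine but slightly delicate step is keeping track of the supports carefully: the $\eta'$ factor localizes the boundary term to the annular strip $\br{x}\le|y|\le 2\br{x}$ on which the inner $y$-integral has size $\br{x}^{k-p}$, and this is precisely what is needed to match the definition of $\cN(f,p)$. The assumption $p\le\nu$ is used only to ensure $\nu-p\ge 0$ so that the $y$-integral $\int_{|y|\le 2\br{x}}|y|^{\nu-p}\,dy$ is indeed $\lesssim\br{x}^{\nu-p+1}$; I do not anticipate any other obstacle.
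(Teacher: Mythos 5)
Your proof is correct and follows essentially the same line as the paper's: compute $\partial_{\bar z}\widetilde f$, observe telescoping in the analytic part so only $f^{(\nu+2)}(x)(iy)^{\nu+1}/(\nu+1)!$ survives inside the strip $|y|\le 2\langle x\rangle$, localize the $\partial_{\bar z}\eta$ contribution to the annulus $\langle x\rangle\le|y|\le 2\langle x\rangle$ with $|\partial_{\bar z}\eta|\lesssim 1/\langle x\rangle$, and perform the $y$-integration in each term to land on the summands of $\cN(f,p)$. The paper packages this slightly differently (introducing $\rho(\mu)=|\eta'(\mu)|\langle\mu\rangle$, the intermediate function $G(x)$, and a dominated-convergence cutoff at the end), but the decomposition and the bounds are the same as yours.
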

\begin{proof}

Differentiating formula \eqref{tfDef}, we obtain the estimate
\begin{align}\label{dzfEst}
	\abs{\di_{\bar z} \widetilde f(z)}\le
	\eta\del{\frac{y}{\br{x}}}\frac{\abs{y}^{\nu+1}}{(\nu+1)!}\abs{f^{(\nu+2)}(x)}+\sum_{k=0}^{\nu+1}\rho\del{\frac{y}{\br x}}\frac{\abs{y}^k}{k!}\abs{\frac{1}{\br{x}}f^{(k)}(x)},
\end{align}
where \begin{align}\label{rhodef}
	\rho(\mu):=\abs{\eta'(\mu)}\br{\mu}
\end{align} is supported on $1<\abs{\mu}<2$. 

For each fixed $x$, we define
\begin{align}\label{Gxdef}
	G(x):=p.v.\int \abs{\di_{\bar z}f(z)}\abs{y}^{-(p+1)}\,dy
\end{align}
by integrating \eqref{dzfEst} against $\abs{y}^{-(p+1)}$.  Using that 	$\eta(y/\br x)\equiv 0$ for $\abs{y}>\br{x}$ and $\rho(y/\br{x})\equiv 0$ for $\abs{y}\le \br{x}$ or $\abs{y}\ge2\br{x}$, we find
\begin{align}
	G(x)\le&\int_{\abs{y}\le \br{x}} \frac{\abs{y}^{\nu-p}}{(\nu+1)!}\eta\del{\frac{y}{\br{x}}}\,dy\abs{f^{(\nu+2)}(x)}\label{549}
	\\&+\sum_{k=0}^{\nu+1}\int_{\br{x}<\abs{y}<2 \br{x}} \rho\del{\frac{y}{\br x}}\frac{\abs{y}^{k-p-1}}{k!}\,dy\abs{\frac{1}{\br{x}}f^{(k)}(x)}.\label{549'}
\end{align}
Since $0\le \eta(\mu)\le1$ and $\nu\ge p$, the integral in line \eqref{549} converges and can be bounded as
\begin{align}
	\int_{\abs{y}\le \br{x}} \frac{\abs{y}^{\nu-p}}{(p+1)!}\eta\del{\frac{y}{\br{x}}}\,dy\abs{f^{(p+2)}(x)}\le\frac{2\br{x}^{\nu-p+1}}{(p+1)!}\abs{f^{(p+2)}(x)}.\label{RemEst1}
\end{align}

To bound line \eqref{549'}, we use that $\rho(y/\br{x})< \sqrt{5}$ and $\abs{y}^{k-p-1}\le \br{x}^{k-p-1}$ for $\br{x}<\abs{y}<2\br{x}$,  $0\le k\le p+1$ (see \eqref{rhodef}). Thus  each integral in line \eqref{549'} can be bounded as
\begin{align}
	&\sum_{k=0}^{\nu+1}\int_{\br{x}<\abs{y}<2 \br{x}} \rho\del{\frac{y}{\br x}}\frac{\abs{y}^{k-p-1}}{k!}\,dy\abs{\frac{1}{\br{x}}f^{(k)}(x)}\notag\\\le&{\sum_{k=0}^{p+1}  \frac{4\sqrt{5} \br{x}^{k-p-1}}{k!}\abs{f^{(k)}(x)}
		+\sum_{k=p+1}^{\nu+1}  \frac{\sqrt{5}\cdot 2^{k-p+1}\br{x}^{k-p-1}}{k!}\abs{f^{(k)}(x)}}
	.\label{RemEst2}
\end{align}
Combining \eqref{RemEst1}--\eqref{RemEst2} in \eqref{549'}, we conclude that  
\begin{align}
	\abs{G(x)}\le C F(x),\quad F(x):= \sum_{m=0}^{\nu+2}  \br{x}^{m-p-1}\abs{f^{(m)}(x)}.
\end{align}

Let $G_\lambda(x):=\one_{[-\lambda,\lambda]} G(x)$ with $\lambda>0$. Then   $G_\lambda\in L^1$ and $\abs{G_\lambda(x)}\le CF(x)$ for any $\lambda$. By assumption \eqref{Ncond} and definition\eqref{pnNorm}, we have $\norm{F}_{L^1}=\cN(f,p)<\infty$ and so $F\in L^1$. Therefore, sending $\lambda\to\infty$ and using the dominated convergence theorem   yields $G\in L^1$ with \begin{align}\label{GFest}
	\norm{G}_{L^1}\le C\norm{F}_{L^1}.
\end{align} Recalling definition \eqref{Gxdef}, we find $(2\pi)^{-1}\norm{G}_{L^1}=$l.h.s. of \eqref{fRemEst}. 
Thus we conclude   \eqref{fRemEst} from \eqref{GFest}.
\end{proof}


\subsection{Commutator expansions}\label{4.sec:commut}

In this section, we take 
$\widetilde f(z)$,  $d\widetilde f(z)$ to be as in \eqref{tfDef}--\eqref{measDef}.

We frequently use the following result, taken from \cite[Lems. B.2]{HunSig3}:
\begin{lemma}
\label{lemHSj-rep}
Let $f\in C^{\nu+2}(\Rb)$ satisfy \eqref{Ncond} for some $p\ge0$. 
Then for any self-adjoint operator $A$ on $\hf$, 
\begin{align}
	\label{HSj-rep}
	\frac{1}{p!}f^{(p)}(A)=\int_\Cb d\widetilde f(z)(z-A)^{-(p+1)},
\end{align}
where the integral   converges absolutely in operator norm and is uniformly bounded  in $A$.  
\end{lemma}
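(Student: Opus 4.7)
The plan is to follow the standard Helffer--Sjöstrand strategy: first establish operator-norm convergence of the right-hand side using the already-proven remainder bound; then prove the identity scalarwise for $\lambda\in\Rb$; then lift to the self-adjoint operator $A$ by the spectral theorem.

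First, I would verify absolute convergence and the uniform bound. Since $A$ is self-adjoint, the resolvent estimate $\|(z-A)^{-1}\|\le |\Im z|^{-1}$ gives $\|(z-A)^{-(p+1)}\|\le |\Im z|^{-(p+1)}$, so
\begin{equation*}
\int_\Cb \|d\widetilde f(z)\,(z-A)^{-(p+1)}\|\le \int_\Cb |d\widetilde f(z)|\,|\Im z|^{-(p+1)}\le C,
\end{equation*}
where the last bound is precisely \eqref{fRemEst} of \lemref{lemRemEst} (applicable since $0\le p\le \nu$ and $\cN(f,p)<\infty$). In particular the integral on the right of \eqref{HSj-rep} converges absolutely in $\cB(\hf)$ and the bound is independent of $A$.

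Next, I would prove the scalar version: for every $\lambda\in\Rb$,
\begin{equation*}
\frac{1}{p!}f^{(p)}(\lambda)=\int_\Cb d\widetilde f(z)\,(z-\lambda)^{-(p+1)}.
\end{equation*}
For $p=0$, apply the Cauchy--Pompeiu formula (a complex form of Stokes' theorem) to $\widetilde f$ on the complement of a small disc around $\lambda$, intersected with a large ball. The support cutoff $\eta(y/\br{x})$ in definition \eqref{tfDef} together with $\widetilde f(x)=f(x)$ for $x\in\Rb$ makes the boundary contributions at infinity vanish, while the small-disc boundary integral evaluates to $\widetilde f(\lambda)=f(\lambda)$ in the limit. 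The resulting area integral is integrable near $\Rb$ because $\di_{\bar z}\widetilde f(x+iy)=O(|y|^{\nu+1})$ by construction, which dominates $|z-\lambda|^{-(p+1)}$ as long as $\nu\ge p$. For general $p$, I would differentiate the $p=0$ identity $p$ times in $\lambda$; using $\di_\lambda^{p}(z-\lambda)^{-1}=p!\,(z-\lambda)^{-(p+1)}$ and the bound from Step~1 (with $A$ replaced by the scalar $\lambda$, but uniformly controlled by \lemref{lemRemEst}) to justify differentiation under the integral sign, the identity follows.

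Finally, I would lift the scalar identity to $A$ using the spectral theorem. Let $E_A$ denote the projection-valued spectral measure of $A$. Both sides of the scalar identity are continuous functions of $\lambda$ on $\Rb$ (uniformly bounded in $\lambda$ by the same estimate in Step~1), so integrating against $dE_A(\lambda)$ and applying Fubini to exchange $\int_\Cb d\widetilde f(z)$ with $\int_\Rb dE_A(\lambda)$ (legal because of the uniform absolute-convergence bound) turns the scalar identity into the operator identity \eqref{HSj-rep}, and the norm bound carries through.

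The main obstacle is the $p=0$ scalar identity: one must justify carefully that the boundary terms in Stokes' theorem vanish (using the compact-in-$y/\br{x}$ support structure of the cutoff $\eta$ in \eqref{tfDef}) and that the area integrand is locally integrable near $z=\lambda\in\Rb$ (using that $\di_{\bar z}\widetilde f$ vanishes to order $\nu+1$ in $|y|$). Everything else is bookkeeping once this core identity is in hand.
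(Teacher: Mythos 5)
The paper does not actually prove Lemma~\ref{lemHSj-rep}; it is taken verbatim (and cited) from Hunziker--Sigal \cite{HunSig3}, Lemma~B.2. So you have supplied a proof where the paper has none, and your proof is essentially the standard Helffer--Sj\"ostrand argument: absolute convergence via the resolvent bound and Lemma~\ref{lemRemEst}, the scalar identity via Cauchy--Pompeiu, and lifting via the spectral theorem plus Fubini. Steps~1 and~3 are correct as written, and the overall architecture is right.

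There is, however, one gap in Step~2 worth flagging. You start from the $p=0$ scalar identity $f(\lambda)=\int d\widetilde f(z)(z-\lambda)^{-1}$ and differentiate $p$ times. But the hypothesis is only $\cN(f,p)<\infty$ for the given $p$, which does \emph{not} imply $\cN(f,0)<\infty$. For instance, $f\equiv\const\ne0$ has $\cN(f,p)<\infty$ for every $p\ge1$ while $\cN(f,0)=\infty$; more generally, the paper's Remark after Lemma~\ref{lemA.2} stresses that $f$ need not be bounded, only of strictly sublinear growth. In such cases the integral $\int d\widetilde f(z)(z-\lambda)^{-1}$ fails to converge absolutely (one can check the contribution of the region $\br{x}<|y|<2\br{x}$, where $\di_{\bar z}\eta(y/\br{x})$ is supported, diverges logarithmically in $x$), so your $p=0$ starting point is unavailable. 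Two standard fixes exist, and the paper itself signals the first in the proof of Lemma~\ref{lemA.2}: truncate to $f^\lambda:=\eta^\lambda f\in C_c^{\nu+2}$ (for which $\cN(f^\lambda,p')<\infty$ for \emph{all} $p'\ge0$, so your argument applies verbatim), obtain the identity for $f^\lambda$, and pass to $\lambda\to\infty$ using dominated convergence controlled by Lemma~\ref{lemRemEst} at order $p$. Alternatively, one can run Cauchy--Pompeiu directly on the $1$-form $\widetilde f(z)(z-\lambda)^{-(p+1)}\,dz$, where the small-circle boundary contribution produces $\frac{1}{p!}f^{(p)}(\lambda)$ by almost-analyticity and the area term converges absolutely by Lemma~\ref{lemRemEst}, sidestepping the $p=0$ case entirely. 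Either way the remaining bookkeeping in your proposal goes through unchanged.
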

{							\begin{remark}
	Condition \eqref{Ncond} ensures that $f^{(p)}$ is bounded independent of $A$ and  the remainder estimate in \lemref{lemRemEst} ensures the norm convergence of the r.h.s. of \eqref{HSj-rep}.
\end{remark}}

We call equation \eqref{HSj-rep} the \textit{Helffer-Sj\"ostrand (HS) representation}.  The HS representation \eqref{HSj-rep}, together with the remainder estimate \eqref{fRemEst}, implies the following commutator expansion:
\begin{lemma}\label{lemA.2} 
Let $n\ge1$.
Let $f\in C^{n+3}(\Rb)$ satisfy \eqref{Ncond} {with $p=1$}  . Let $A$ be an operator on $\hf$. Let $\phi$ be a densely defined self-adjoint operator on $\hf$. {Let $\A_s(f):=f(s^{-1}(\phi-\al))$ for some fixed $\al$ and all $s>0$.}

Suppose 
\begin{equation}\label{A.4}
	B_k:=		\ad{k}{\phi}{A}\in\cB(\hf)\quad (1\le k\le n+1).
\end{equation}
Then $[A, \A_s(f)]\in\cB(\hf)$, and we have the expansion
\begin{align}  [A, \A_s(f)]= &\sum_{k=1}^n(-1)^k\frac{s^{- k}} {k!}B_k\A_s(f^{(k)}) +(-1)^{n+1}s^{-(n+1)}\Rem_{\rm left}(s)\\
	=&
	\sum_{k=1}^n \frac{s^{- k}}{k!}\A_s(f^{(k)}) B_k + s^{-(n+1)}\Rem_{\rm right}(s),\label{commex} 
\end{align}
where 
the remainders are defined by these relations and given explicitly by \eqref{left-rem}--\eqref{right-rem}.

Moreover,  	 there exists $c>0$ depending only on $n$ and  $\cN(f,n+1)$, such that
\begin{align}
	\norm{\Rem_{\rm left}(s)}_{\rm op}+\norm{\Rem_{\rm right}(s)}_{\rm op}\le& c\norm{B_{n+1}},\label{4.A.21}
\end{align}

\end{lemma}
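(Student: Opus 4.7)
The plan is to derive the expansions in~\eqref{commex} via the Helffer--Sj\"ostrand representation of Lemma~\ref{lemHSj-rep} combined with an iterated commutator identity. Write $T := s^{-1}(\phi-\al)$ and $R(z) := (z-T)^{-1}$. Since~\eqref{Norder} gives $\cN(f,k)\le\cN(f,1)$ for every $k\ge 1$, Lemma~\ref{lemHSj-rep} supplies the norm-convergent representations
\begin{equation*}
    \frac{1}{k!}\,\A_s(f^{(k)}) \;=\; \int_\Cb d\widetilde f(z)\,R(z)^{k+1}, \qquad k\ge 1.
\end{equation*}

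The key algebraic ingredient is the resolvent identity
\begin{equation*}
    [A,R(z)]=-R(z)[A,R(z)^{-1}]R(z)=s^{-1}R(z)B_1R(z),
\end{equation*}
obtained from $[A,\phi]=B_1$, together with its analogue $[B_k,R(z)]=s^{-1}R(z)B_{k+1}R(z)$ coming from $[B_k,\phi]=B_{k+1}$. To extract the right-handed expansion I would commute $B_1$ across the rightmost $R(z)$ using $B_1R(z)=R(z)B_1+s^{-1}R(z)B_2R(z)$ and iterate on the resulting remainder term. After $n$ such steps one arrives, pointwise in $z$, at
\begin{equation*}
    [A,R(z)]=\sum_{k=1}^{n}s^{-k}R(z)^{k+1}B_k+s^{-(n+1)}R(z)^{n+1}B_{n+1}R(z),
\end{equation*}
and an analogous identity (with alternating signs and the $B_k$'s accumulated on the left) for the left-handed version. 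Integrating both sides against $d\widetilde f(z)$ and invoking the displayed HS identities for $k=1,\dots,n$ turns the sum into $\sum_{k=1}^n\tfrac{s^{-k}}{k!}\A_s(f^{(k)})B_k$, yielding~\eqref{commex} with
\begin{equation*}
    \Rem_{\rm right}(s):=\int_\Cb d\widetilde f(z)\,R(z)^{n+1}B_{n+1}R(z),
\end{equation*}
and similarly $\Rem_{\rm left}(s)$.

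For the bound~\eqref{4.A.21} I would pass the operator norm inside the integral. Since $\norm{R(z)}\le|\Im z|^{-1}$,
\begin{equation*}
    \norm{\Rem_{\rm right}(s)}\le\norm{B_{n+1}}\int_\Cb|d\widetilde f(z)|\,|\Im z|^{-(n+2)},
\end{equation*}
and the remaining integral is exactly what Lemma~\ref{lemRemEst} controls with $p=n+1$: the hypothesis $f\in C^{n+3}$ corresponds to $f\in C^{\nu+2}$ with $\nu=n+1\ge p$, and $\cN(f,n+1)\le\cN(f,1)$ is finite by assumption, so the integral is bounded by a constant depending only on $n$ and $\cN(f,1)$. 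This gives~\eqref{4.A.21} for $\Rem_{\rm right}$, and the same estimate applies to $\Rem_{\rm left}$. The main technical point is bookkeeping in the iteration --- ensuring the $B_k$'s land on the intended side with the correct signs and the correct powers of $R(z)$ --- and verifying absolute convergence at each stage so that commutators, sums, and the $d\widetilde f(z)$-integration may all be interchanged; Lemma~\ref{lemRemEst} supplies the latter uniformly.
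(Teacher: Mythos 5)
Your expansion of $[A,R(z)]$ by repeatedly commuting the $B_k$'s to one side reproduces exactly the paper's identities \eqref{B25}--\eqref{B26}, and your treatment of the remainder --- pulling the norm inside, using $\norm{R(z)}\le\abs{\Im z}^{-1}$, and invoking \lemref{lemRemEst} with $\nu=n+1$, $p=n+1$, together with $\cN(f,n+1)\le\cN(f,1)$ --- is precisely the paper's estimate \eqref{4.A.23}. So the route is the same as the paper's. However, there is one genuine gap at the step where you "integrate both sides against $d\widetilde f(z)$" and identify the left-hand side with $[A,\A_s(f)]$: this tacitly uses the $p=0$ Helffer--Sj\"ostrand representation $\A_s(f)=\int d\widetilde f(z)\,R(z)$, which is \emph{not} available under the stated hypotheses. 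The lemma only assumes \eqref{Ncond} with $p=1$; for the functions the lemma is actually applied to (e.g.\ $\chi\in\cX$, bounded and non-decaying at $+\infty$) one has $\cN(f,0)=\infty$, and the integral $\int d\widetilde f(z)\,R(z)$ need not converge. Absolute convergence of the right-hand side $\int d\widetilde f(z)\,[A,R(z)]$ (which you do have, since $\norm{[A,R(z)]}\le s^{-1}\norm{B_1}\abs{\Im z}^{-2}$ and \lemref{lemRemEst} applies with $p=1$) does not by itself prove that this integral equals $[A,\A_s(f)]$.

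The paper closes exactly this gap by a truncation argument: it sets $f^\lambda:=\eta^\lambda f$ with compactly supported cutoffs $\eta^\lambda$, notes that $f^\lambda$ does satisfy \eqref{Ncond} with $p=0$ so that $[A,\A_s(f^\lambda)]=\int d\widetilde{f^\lambda}(z)[A,R(z)]$ is legitimate, inserts the resolvent expansion, and only then lets $\lambda\to\infty$, using \eqref{HSj-rep} for $p=1,\dots,n$ and the remainder estimate \eqref{fRemEst} for $p=n+1$ to pass to the limit. Your closing remark about "verifying absolute convergence at each stage" points in the right direction but attributes the fix to \lemref{lemRemEst}, which does not supply the missing $p=0$ representation; some approximation (or an equivalent device) is needed. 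A second, minor point: since $A$ is only assumed to be an operator with bounded multiple commutators, the identity $[A,R(z)]=s^{-1}R(z)B_1R(z)$ and the subsequent manipulations should first be read as quadratic forms on $\cD(A)$ and then extended by boundedness of the right-hand sides, as the paper does; this also yields the claimed conclusion $[A,\A_s(f)]\in\cB(\hf)$ rather than assuming it.
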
 
\begin{remark}
Note that $f$ needs not to be bounded. By \eqref{fCond}, it suffices for $f$ to have strictly sublinear growth.
\end{remark}
\begin{proof}[Proof of \lemref{lemA.2}]
Within this proof we write $R=(z-\phi_{s,\al})^{-1}$ with  $\phi_{s,\al}=s^{-1}(\phi-\al)$.		

Since $R$ is bounded,  it follows that 
\begin{equation}\label{4.A.26}
	\big[ A , R \big ]= s^{-1}R\Ad_\phi(A)R
\end{equation}
holds in the sense of quadratic forms on $\mathcal{D}(A)$. {Since $\Ad_\phi(A)$ is bounded by assumption, the r.h.s. of \eqref{4.A.26} is bounded and so $[A,R]$ extends to an bounded operator on $\hf$.} Using \eqref{4.A.26}, we   proceed by commuting successively the commutators $B_k:=\ad{k}{\phi}{A}$ to left and right, respectively. Iteratively, we  obtain
{					\begin{align}
		&[A , R]\notag
		\\=& \sum_{k=1}^n (-1)^ks^{-k}B_k R^{k+1} +(-1)^{n+1}s^{-(n+1)}RB_{n+1}R^{n+1} \label{B25}
		\\=&\sum_{k=1}^n  s^{-k}R^{k+1}B_k + s^{-(n+1)}R^{n+1}B_{n+1}R,\label{B26}
	\end{align}					which hold on all of $\hf$} since  $B_k$'s are bounded operators by assumption \eqref{A.4}.

Let $\eta^\lambda\in C_c^\infty(\Rb)$, $\lambda>0$ be cutoff functions   with  	$\eta^\lambda(x)\equiv1$ for $\abs{x}\le\lambda$,  $\eta(x)\equiv0$ for $\abs{\mu}\ge\lambda+1$, and $\norm{\eta^\lambda}_{C^{n+3}}\le C$ for all $\lambda$.
Set $f^\lambda:= \eta^\lambda f$.
Since $f^\lambda\in C_c^{n+3}$, it satisfies \eqref{Ncond} for all $p\ge0$. (Note that $f$ itself, a priori, does not satisfy \eqref{Ncond} with $p=0$.) Thus the HS representation  \ref{HSj-rep} holds with $p=0$ and so 
\begin{align}\label{4.HSj-rep7}
	[A, \A_s(f^\lambda)]=\int d\widetilde {f^\lambda}(z) \big[ A , R \big ],
\end{align}
which holds a priori on $\cD(A)$. 
Plugging expansions \eqref{B25}--\eqref{B26} into \eqref{4.HSj-rep7} yields
\begin{align}
	&\quad[A, \A_s(f)^\lambda]\notag\\&= \sum_{k=1}^n(-1)^k\frac{s^{- k}}{k!} B_k\int d\widetilde {f^\lambda}(z) R^{k+1} +(-1)^{n+1}s^{-(n+1)}\Rem_{\rm left}^\lambda(s),\label{4.HSj-rep9}\\
	&=\sum_{k=1}^n \frac{s^{- k}}{k!}\int d\widetilde {f^\lambda}(z) R^{k+1} B_k+ s^{-(n+1)}	\Rem_{\rm right}^\lambda(s)\label{4.A.20},
\end{align}
where
\begin{align}
	\Rem_{\rm left}^\lambda(s)&=\int d\widetilde {f^\lambda}(z)RB_{n+1}R^{({n+1})},\label{left-rem}\\
	\Rem_{\rm right}^\lambda(s)&=\int d\widetilde {f^\lambda}(z)R^{({n+1})}B_{n+1}R\label{right-rem}. 
\end{align}    
Since the operator $B_{n+1}$ is bounded independent of $\lambda,\,z$, and $\norm{R}\le \abs{\Im(z)}^{-1}$, we have
\begin{align}\label{4.A.23}
	\notag  &\norm{\Rem_{\rm left}^\lambda(s)}_{\rm op}+\norm{\Rem_{\rm right}^\lambda(s)}_{\rm op}\\\le& 2 \| B_{n+1}\|  \int |d\widetilde {f^\lambda}(z)| \norm{R}_\op^{n+2}\notag\\
	\le &2 \| B_{n+1}\|  \int |d\widetilde {f^\lambda}(z)| |\Im(z)|^{-(n+2)}.
\end{align}
Similarly we could bound the sums in \eqref{4.HSj-rep9}--\eqref{4.A.20}. Thus we see $[A,\A_s(f^\l)]$ extends to a bounded operator on $\hf$ for each $\lambda$.

By \eqref{Norder} and the assumption $\cN(f,1)\le C$, $f$ satisfies condition \eqref{Ncond} with $p=1,\ldots, n+1$. Hence,  sending $\lambda\to \infty$ in \eqref{4.HSj-rep9}--\eqref{right-rem} and using \eqref{HSj-rep} for $p=1,\ldots, n$ and remainder estimate \eqref{fRemEst} for $p=n+1$, we conclude that $[A,\A_s(f)]\in \cB(\hf)$ and  expansions 
\eqref{commex} and  estimate   \eqref{4.A.21} hold.\end{proof}

The following lemma is a direct consequence of the estimates proved in \lemref{lemA.2}:
\begin{lemma}\label{lem5.5}
	Let $G(t)$ be defined as in \eqref{commVphi}. Then for any $s>0$, $\chi\in\cE$, the commutator $[\A_s(\chi),V(t)]$ extends to bounded operators for all times and satisfies, for some absolute constant $C>0$ ,
	\begin{align}
		\label{AVcommEst}
 	\norm{[V(t),\A_s(\chi)]}\le Cs^{-1}G(t) \qquad (t\in\Rb).
	\end{align}
\end{lemma}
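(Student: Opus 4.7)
The plan is to apply the Helffer--Sj\"ostrand representation of \lemref{lemHSj-rep} with $p=0$ and then commute $V(t)$ through the resolvents. Writing $\phi_{s,t}:=s^{-1}(\phi-c\abs{t})$, we have $\A_s(\chi)=\chi(\phi_{s,t})$. The first check is that $\cN(\chi,1)<\infty$ for every $\chi\in\cX$, which justifies both \lemref{lemHSj-rep} at $p=0$ and the remainder estimate \lemref{lemRemEst} at $p=1$: since $\chi'\ge 0$ is supported in $(0,\delta)$, the function $\chi$ is bounded (stabilizing at $\chi(\delta)=\int_0^\delta\chi'$), and all higher derivatives $\chi^{(m)},\,m\ge 1$ are compactly supported, so each summand in \eqref{pnNorm} is finite.

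Armed with the representation
\begin{equation*}
\A_s(\chi)=\int_{\Cb}d\widetilde{\chi}(z)\,(z-\phi_{s,t})^{-1},
\end{equation*}
I would pass the commutator with $V(t)$ inside the integral and invoke the second resolvent identity, yielding the key computation
\begin{equation*}
[V(t),(z-\phi_{s,t})^{-1}]=s^{-1}(z-\phi_{s,t})^{-1}[V(t),\phi](z-\phi_{s,t})^{-1},
\end{equation*}
which holds as a quadratic-form identity on $\cD(\phi)$ and extends to a bounded operator identity on $\hf$ since $[V(t),\phi]$ is bounded by \eqref{commVphi}. Using $\|(z-\phi_{s,t})^{-1}\|\le|\Im z|^{-1}$ together with the remainder estimate \eqref{fRemEst} then gives
\begin{equation*}
\|[V(t),\A_s(\chi)]\|\le s^{-1}\|[V(t),\phi]\|\int|d\widetilde{\chi}(z)|\,|\Im z|^{-2}\le Cs^{-1}G(t),
\end{equation*}
with $C$ depending only on $\cN(\chi,1)$, which is an absolute constant once $\chi$ is fixed.

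The only subtlety, and what I expect to be the mild technical obstacle, is rigorously promoting the formal commutator identity to all of $\hf$: $V(t)$ is bounded but $\phi$ is typically unbounded, so strictly speaking the swap of commutator and integral must be justified. The cleanest route is to mimic the truncation argument used in the proof of \lemref{lemA.2}: replace $\chi$ by $\chi^\lambda:=\eta^\lambda\chi$ with $\eta^\lambda\in C_c^\infty$, for which all operations are unambiguously bounded, and then send $\lambda\to\infty$ by dominated convergence using a uniform bound on $\cN(\chi^\lambda,1)$ and the boundedness of $[V(t),\phi]$. Crucially, this argument never requires higher commutators $\ad{k}{\phi}{V(t)}$ for $k\ge 2$, which is essential because \eqref{commVphi} only supplies a bound at order one.
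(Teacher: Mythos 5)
Your proposal is correct and follows essentially the same route as the paper: the Helffer--Sj\"ostrand representation for the truncated cutoff $\chi^\lambda$, the resolvent commutator identity $[V(t),(z-\phi_{s,t})^{-1}]=s^{-1}R[V(t),\phi]R$, the bound $\norm{R}\le\abs{\Im z}^{-1}$ combined with the remainder estimate, and finally $\lambda\to\infty$ using the uniform bound on the integral. Your extra check that $\cN(\chi,1)<\infty$ for $\chi\in\cX$ and your observation that only the first-order commutator $[V(t),\phi]$ is needed are both correct and consistent with the paper's argument.
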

\begin{proof}
	
	By relations \eqref{4.A.26} and \eqref{4.HSj-rep7}, we have for all $t\in\Rb$ that
	\begin{align}
		\label{}
		[V(t),\A_s(\chi)]=s^{-1}\int d\widetilde {\chi^\lambda}(z) R\big[ V(t) , \phi \big ]R.
	\end{align}
	This, together with the definition $G(t)\equiv \norm{[\phi,V(t)]}$, yields
	\begin{align}
		\label{533}
	 	\norm{[V(t),\A_s(\chi)]}\le   s^{-1} G(t)\int |d\widetilde {\chi^\lambda}(z)| |\Im(z)|^{-2} .
	\end{align}
	Since the integral in \eqref{533} is uniformly bounded by some absolute constant for all $\l$,   sending $\l\to\infty$ yields the desired estimate \eqref{AVcommEst}. 
\end{proof}

	\section*{Acknowledgments}

The Author is supported by National Key R \& D Program of China Grant 2022YFA100740, China Postdoctoral Science Foundation Grant 2024T170453,  and the Shuimu Scholar program of Tsinghua University.
He was also supported by DNRF Grant CPH-GEOTOP-DNRF151, DAHES Fellowship Grant 2076-00006B, DFF Grant 7027-00110B, and the Carlsberg Foundation Grant CF21-0680 during the completion of this paper. 
He thanks G.~Grubb for introduction to the subject matter in \secref{chapMVENonLoc}, R.~Frank for helpful discussion and pointing out reference \cite{MR0177312}, and C.~Jiang for useful comments on earlier draft of the paper. He thanks S.~Breteaux, J.~Faupin, M.~Lemm, C.~Rubiliani, M.~Sigal,  and D.~Ouyang for fruitful collaborations, and especially  M.~Lemm for pointing out \eqref{tailEst} as well as other helpful comments.

Parts of this work were done while the Author was visiting MIT. Earlier version of parts of this work has appeared as a chapter in the Author's PhD thesis at the University of Copenhagen.

		\section*{Declarations}
\begin{itemize}
	\item Conflict of interest: The Author has no conflicts of interest to declare that are relevant to the content of this article.
	\item Data availability: Data sharing is not applicable to this article as no datasets were generated or analysed during the current study.
\end{itemize}

\appendix

\section{Commutator estimates}\label{sec:A}

In this appendix, we prove that condition \eqref{k-cond1} implies uniform estimates on multiple commutators with (multiplication operator by) Lipschitz functions. In particular, this implies that \eqref{k-cond0} holds with $H_0$ from \eqref{1.2} and $\phi=d_X$.

\begin{lemma}
	\label{lemA.1}
	Let $n\ge1$. Suppose $A$ is an operator acting on  $L^2(\Rb^d)$ as
	\begin{equation}\label{A.0}
		A[u](x)=\int_{\Rb^d} (V(x)u(x)-u(y))K(x,y)\,dy
	\end{equation}
	for $V\in L^\infty(\Rb^d)$ and integral kernel $K(x,y)$ satisfying
	\begin{align}
		\label{A.1}
		M:=&\sup_{1\le p\le n+1}\del{\sup _{x\in\Rb^d} \int_{y\in\Rb^d}  \abs{K(x,y)}\abs{x-y}^{p}}\del{\sup _{y\in\Rb^d} \int_{x\in\Rb^d} \abs{K(x,y)}\abs{x-y}^{p}}<\infty.
	\end{align}
	{Then for every Lipschtiz function $f$} on $ \Rb^d$ such that for some $L>0$,

	\begin{equation}\label{phicond}
		\abs{f(x)-f(x)}\le L \abs{x-y}\quad (x,y\in\Rb^d),
	\end{equation}
	there holds
	\begin{equation}\label{CommEst}
		\norm{\ad{k}{f}{A}}\le L^k M\quad (1\le k\le n+1).
	\end{equation}
\end{lemma}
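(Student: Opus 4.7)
The strategy is an explicit computation of the $k$-fold commutator as an integral operator, followed by Schur's test.

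First I would compute $[A,f]$ directly from the defining formula \eqref{A.0}. Since $f$ acts as multiplication,
\[
[A,f]u(x) = A(fu)(x) - f(x)Au(x) = \int_{\Rb^d} \bigl(f(x)-f(y)\bigr) K(x,y) u(y)\, dy,
\]
where the pointwise cancellation (under the $y$-integral) of the two $V(x)f(x)u(x)K(x,y)$ contributions eliminates any reference to the potential $V$. Under the Lipschitz bound \eqref{phicond} together with \eqref{A.1}, the new kernel $(f(x)-f(y))K(x,y)$ is absolutely integrable, so no principal value is needed at this stage or at any subsequent one.

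Next, set $h(x,y) := f(x)-f(y)$, and for any measurable symbol $g(x,y)$ write $T_g$ for the integral operator $T_g u(x) := \int g(x,y) K(x,y) u(y)\, dy$. An identical algebraic manipulation to the one above shows $[T_g, f] = -T_{gh}$ whenever the relevant kernels are integrable. Combined with the initial identity $\ad{1}{f}{A} = T_h$, a straightforward induction on $k$ yields the clean closed form
\[
\ad{k}{f}{A}\, u(x) = (-1)^{k-1} \int_{\Rb^d} \bigl(f(x)-f(y)\bigr)^k K(x,y) u(y)\, dy, \qquad 1 \le k \le n+1.
\]

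Finally, I would bound this integral operator via Schur's test. The Lipschitz assumption gives the pointwise estimate $|f(x)-f(y)|^k \le L^k |x-y|^k$, so the modulus of the kernel is dominated by $L^k |x-y|^k |K(x,y)|$, whose two marginal suprema over $x$ and $y$ are precisely the factors appearing in the definition \eqref{A.1} of $M$. Schur's lemma then yields the desired bound $\norm{\ad{k}{f}{A}} \le L^k M$ (or, more sharply, $L^k M^{1/2}$, matching the geometric mean Schur naturally produces). No substantive obstacle is anticipated; the only mild subtlety is the rigorous justification of the formal commutator manipulations when $K$ has a diagonal singularity, but the factor $|x-y|^k$ with $k \ge 1$ dominates any such singularity thanks to \eqref{A.1}, permitting a routine truncation-and-limit argument on a dense subspace.
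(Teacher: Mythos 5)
Your proposal is correct and takes essentially the same route as the paper: note that the potential term drops out of the commutator, prove the closed-form kernel $\ad{k}{f}{A}\colon u\mapsto \int (f(x)-f(y))^k K(x,y) u(y)\,dy$ (up to sign) by induction, and apply Schur's test with the Lipschitz bound. Your aside about Schur naturally giving $L^k M^{1/2}$ rather than $L^k M$ is also a legitimate observation about the definition of $M$ as a product of the two marginal suprema.
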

\begin{proof}
	We first prove that for each fixed $f:\Rb^d\to \Cb$ and all $1\le k\le n+1$, we have 
	\begin{equation}\label{A.2}
		\ad{k}{f}{A}[u]=-\int(f(y)-f(x))^kK(x,y)u(y)\,dy.
	\end{equation}
	We prove this by a simple induction.  Clearly, the $V$ term  in \eqref{A.0} does not contribute to the commutators $	\ad{k}{f}{A}$, since $[V,f]\equiv 0$. Hence below we take $V\equiv 0$  in \eqref{A.0}.

	For the base case $k=1$, we compute, for fixed $f$ and every $u$, 
	\begin{align*}
		A[fu](x)=& -\int K(x,y)f(y)u(y)\,dy,\\
		f(x)		A[u](x)=& -\int f(x)K(x,y)u(y)\,dy.
	\end{align*}
	Taking the difference yields \eqref{A.2} with $k=1$. Now assume \eqref{A.2} holds for $k$. Then we have
	\begin{align*}
		\ad{k}{f}{A}[fu](x)=-& \int (f(y)-f(x))^k K(x,y)f(y)u(y)\,dy,\\
		f(x)		\ad{k}{f}{A}[u]= -&\int f(x)(f(y)-f(x))^kK(x,y)u(y)\,dy.
	\end{align*}
	Since $\ad{k+1}{f}{A}=[\ad{k}{f}{A},f] $, taking the difference of the last two expressions yields \eqref{A.2} for $k+1$. This completes the induction.

	Formula \eqref{A.2}, together with the Schur test for integral operators, implies
	\begin{align}\label{A.3}
		\norm{\ad{k}{f}{A}}^2\le& \del{\sup _{x\in\Rb^d} \int_{y\in\Rb^d} \abs{K(x,y)}\abs{f(x)-f(y)}^k}   \del{\sup _{y\in\Rb^d} \int_{x\in\Rb^d} \abs{K(x,y)}\abs{f(x)-f(y)}^k}.
	\end{align}
	Now we compute, using assumptions \eqref{A.1} and \eqref{phicond}, that
	\begin{align*}
		&\sup _{x\in\Rb^d} \int_{y\in\Rb^d} \abs{K(x,y)}\abs{f(x)-f(y)}^k \le L^k\sup _{x\in\Rb^d} \int_{y\in\Rb^d} \abs{K(x,y)}\abs{x-y}^k\le L^kM .
	\end{align*}
	This bounds the first  term in the r.h.s. of \eqref{A.3}. Similarly we can derive the same bound for the second term in the r.h.s. of \eqref{A.3}. Plugging the results back to \eqref{A.3} yields estimate \eqref{CommEst}.
\end{proof}

\bibliography{MVBNonLocBibfile}

	\end{document}